\newtheoremstyle{mytheorem}{.5em}{.5em}%
     {\it}
     {}
     {}
     {}
     {.5em}
     {#2 \thmname{ \bf{#1}.} \thmnote{\it{#3}.}}
\theoremstyle{mytheorem}
\newtheorem{theorem}{Theorem}[section]
\newtheorem{lemma}[theorem]{Lemma}
\newtheorem{corollary}[theorem]{Corollary}
\newtheorem{prop}[theorem]{Proposition}
\newtheoremstyle{note}{.5em}{.5em}%
     {}
     {}
     {}
     {}
     {.5em}
     {#2 \thmname{ \bf{#1}.} \thmnote{\it{#3}.}}
\theoremstyle{note}
\newtheorem{example}[theorem]{Example}
\newtheorem{remark}[theorem]{Remark}
\newtheorem{definition}[theorem]{Definition}
\newcommand{\lbr}{[\hspace{-1.5pt}[}  
\newcommand{\rbr}{]\hspace{-1.5pt}]}  
\newcommand{\calJ}{\mathcal J}
\newcommand{\La}{\Lambda}           
\newcommand{\la}{\lambda}            
\newcommand{\IF}{\mathcal{I}_F}    
\newcommand{\ttt}{\mathtt t}     
\newcommand{\ep}{\epsilon}
\newcommand{\Th}{\Theta}
\newcommand{\De}{\Delta}
\newcommand{\Dev}{{\mathbf{\mathit{\Delta}}}}
\newcommand{\ka}{\kappa}
\newcommand{\de}{\delta}
\newcommand{\al}{\alpha}
\newcommand{\ga}{\gamma}
\newcommand{\Ga}{\Gamma}
\newcommand{\Z}{\mathbb Z}
\newcommand{\Q}{\mathbb Q}
\newcommand{\tilF}{{\tilde{F}}}
\newcommand{\HF}{\mathbf{H}_F}
\newcommand{\WHF}{{\hat{\mathbf{H}}_F}}  
\newcommand{\CHF}{{\mathcal{H}_F}} 
\newcommand{\DF}{{\mathbf{D}_F}}
\newcommand{\FA}{{R\lbr \La\rbr _F}}
\newcommand{\FFA}{{R_F\lbr \La\rbr _F}}
\newcommand{\FFAK}{{R_F\lbr \La\rbr _F^\kappa}}
\newcommand{\CR}{{\mathcal{R}_F}}
\newcommand{\GR}{{\mathcal{G}r}}
\newcommand{\CQ}{{\mathcal{Q}^{'}}}
\newcommand{\CQW}{{\mathcal{Q}_W^{'}}}
\newcommand{\LTA}{\mathcal{T}}
\newcommand{\End}{\operatorname{End}}
\title{On the formal affine Hecke algebra}
\date{\today}
\author{Changlong Zhong}
\address{Changlong Zhong, Fields Institute and 
University of Ottawa}
\email{zhongusc@gmail.com}
\begin{document}
\maketitle

\begin{abstract} We study the action of the formal affine Hecke algebra on the formal group algebra, and show that the the formal affine Hecke algebra has a basis indexed by the Weyl group as a module over the formal group algebra. We also define a concept called the normal formal group law, which we use to simplify the relations of the generators of the formal affine Demazure algebra and the formal affine Hecke algebra.
\end{abstract}

\section{Introduction}

Let $R$ be  a commutative ring  and $F$ be a formal group law over $R$. For  the  lattice $\La$ of a root datum, the formal group algebra $R\lbr \La\rbr _F$ was defined in \cite{CPZ} and \cite{CZZ}. It can be thought  as an algebraic replacement of the algebraic oriented cohomology theory (in the sense of Levine--Morel \cite{LM}) of the classifying space $BT$ of the torus $T$. For each root $\al$, they define the so-called  Demazure--Lusztig type operator $\De^F_{\al}$ acting on the formal group algebra $R\lbr \La\rbr _F$. The subring of endomorphisms of $R\lbr \La\rbr _F$ generated by these operators were used to study the algebraic oriented cohomology of flag varieties. Furthermore, generalizing  the construction of the nil-Hecke algebra of \cite{KK86} and \cite{KK90},  in \cite{HMSZ}, these operators were constructed as elements (called the formal Demazure elements $X_{\al}^F$) in the semidirect product of $R\lbr \La\rbr _F$ and the group ring $R[W]$ of the Weyl group $W$. The algebra generated by the formal Demazure elements $\Dev_\al$ is called the formal affine Demazure algebra $\DF$. Such construction provides a more convenient way to study the equivariant cohomology theory. For instance, in \cite{CZZ} and \cite{CZZ2}, it is shown that the $R\lbr \La\rbr _F$-dual of $\DF$ can be view as the $T$-equivariant algebraic oriented cohomology of complete flag variety. Such algebraic language has been proved to be useful, for example, see \cite{MZZ} and \cite{Zho}. 

On the other hand, in the classical theory of Hecke algebra  the Demzaure element $\Dev_{\al}$  can be thought as degeneration of generator $T_{s_{\al}}$ of affine Hecke algebra (see \cite{Lus}, \cite{CG} or \cite{Gin}). Following this direction, in \cite{HMSZ}, for arbitrary formal group law $F$, the formal affine Hecke algebra $\HF$ was defined to be generated by $T^F_{s_{\al_i}}$ for all simple roots $\al_i$. After this generalization, the classical affine Hecke algebra becomes special case of $\HF$ when $F$ is the multiplicative formal group law. The reader could consult \cite{HMSZ} for a more detailed introduction of classic background. 

For the formal affine Hecke algebra $\HF$, the quadratic relations and the braid type relations between the generators were  constructed, but the latter ones come with some mysterious auxiliary coefficients. As a result, the coefficient ring of $\HF$ is certain enlarged  version of the formal group algebra. The goal of this paper is to study these coefficients and to show that this enlargement is not necessary. More precisely, we show in Theorem \ref{thm:mainhecke} that  for each $w\in W$, we fix a reduced sequence $I_w$ of $w$, then  the formal affine Hecke algebra is free with basis $\{T_{I_w}\}_{w\in W}$ as a module over the formal group algebra (with the $\kappa_\al$'s inverted, see the theorem for precise statement). We also show in Theorem \ref{thm:action} that it could be identified as the endomorphism subring of the formal group algebra generated by the formal group algebra and the Demazure--Lusztig type operators. Moreover, in Corollary \ref{cor:center}, we  provide a description of the center of $\HF$. These results provide a solid foundation for the formal affine Hecke algebra.

The second part of this paper considers formal affine Hecke algebra of some specific types of formal group laws. For a (one-dimensional commutative) formal group law 
$$F(x,y)=x+y+\sum_{i,j\ge 1} a_{ij}x^iy^j, \quad a_{ij}\in R$$
such that $a:=a_{11}$ is invertible in $R$,  we define a new formal group law $\tilF$ and call it the associated normalization of $F$. This new formal group law   is isomorphic  to $F$ but with more explicit and simple formal inverse element.  With the normalization, we simplify the notations and relations of the formal (affine) Demazure algebra $\DF$ and the formal (affine) Hecke algebra $\HF$. Moreover, we show that (Theorem \ref{thm:hecke}), when $a$ is invertible,  the coefficients appearing in the braid type relations of the generators $T^F_{s_{\al_i}}$ belong to the formal group algebra for root systems  of all types (except for type $G_2$).  This provides a different proof of Theorem \ref{thm:mainhecke} with more explicit coefficients. Moreover, this method does not use the torsion index, so it could be generalized to the setting when one considers the corresponding formal affine Hecke algebra of Kac-Moody algebras. This will be treated in a forthcoming paper. Moreover, we hope that such simplification will make the study of the formal affine Hecke algebra easier. 

The paper is organized as follows: In Section 2 we recall the notation of formal group algebra and the Demazure--Lusztig type operators, in Section 3 we recall the definition of formal affine Demazure algebra and the formal affine Hecke algebra. In Section 4 we consider the action of the formal affine Hecke algebra on the formal group algebra, and prove the main result (Theorem \ref{thm:mainhecke}). In Section 5 we define the notion of  normal formal group laws, and use it to simplify the notations of the formal affine Demazure algebra and the formal affine Hecke algebra in Section 6.

\section*{Acknowledgements}
The author would like to thank A. Savage and K. Zainoulline for helpful conversations, and Baptiste Calm\`es for suggestions on simplifying the second part of the paper. He would like to thank the referee for many suggestions on polishing this paper. The author was supported by the Fields Institute and NSERC grant of K. Zainoulline.

\section{Formal group algebra and the Demazure--Lusztig type operators}
Let $R$ be a commutative ring with identity and let $R\lbr x \rbr$ be the ring of formal power series. Let $F$ be a one-dimensional formal group law over $R$, i.e., a formal power series $F(x,y)=x+y+\sum_{i,j\ge 1}a_{ij}x^iy^j\in R\lbr x, y\rbr $ satisfying the following properties 
$$F(x,F(y,z))=F(F(x,y),z), \quad F(x,y)=F(y,x), \quad  F(x,0)=x.$$
We sometimes write $x+_Fy$ for $F(x,y)$, and denote the formal inverse of $x$ by $\imath_Fx$, which  has leading terms $-x+ax^2$ with $a:=a_{11}\in R$. Denote
$$\mu_F(x)=\tfrac{\imath_Fx}{-x}\in R\lbr x\rbr  , \quad\kappa^F(x)=\tfrac{1}{x}+\tfrac{1}{\imath_Fx}\in R\lbr x\rbr .$$
\begin{lemma} \cite[Lemma 4.3] {HMSZ} For any FGL $F$ over $R$, $\kappa^F(x)=0$ if and only if $\mu_F(x)=1$, and if and only if $F(x,y)=(x+y)k(x,y)$ for some $k(x, y)\in R\lbr x,y\rbr $.
\end{lemma}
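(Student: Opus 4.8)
The plan is to prove the three-way equivalence $\kappa^F(x)=0 \iff \mu_F(x)=1 \iff F(x,y)=(x+y)h(x,y)$ by a short chain of manipulations, the first two being purely algebraic identities in $R\lbr x\rbr$ and the third requiring a small structural observation about the formal group law.

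First I would record the relation between $\kappa^F$ and $\mu_F$. By definition $\mu_F(x)=\frac{\imath_F x}{-x}$, so $\imath_F x = -x\,\mu_F(x)$, and hence
$$\kappa^F(x)=\frac{1}{x}+\frac{1}{\imath_F x}=\frac{1}{x}-\frac{1}{x\,\mu_F(x)}=\frac{1}{x}\Bigl(1-\frac{1}{\mu_F(x)}\Bigr)=\frac{\mu_F(x)-1}{x\,\mu_F(x)}.$$
Since $\mu_F(x)$ is a unit in $R\lbr x\rbr$ (its constant term is $1$, as $\imath_F x = -x + ax^2+\cdots$), the factor $\frac{1}{x\,\mu_F(x)}$ makes sense after clearing the $x$ in the denominator against the $x$ in the numerator of $\mu_F(x)-1$; more cleanly, $x\kappa^F(x)\,\mu_F(x) = \mu_F(x)-1$, and since $\mu_F(x)$ is a unit this shows $\kappa^F(x)=0 \iff \mu_F(x)-1 = 0 \iff \mu_F(x)=1$. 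That settles the first equivalence.

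Next I would connect $\mu_F(x)=1$ with the factorization. The condition $\mu_F(x)=1$ says exactly $\imath_F x = -x$, i.e.\ the formal inverse is literally negation. Now I use the defining property $F(x,\imath_F x)=0$: if $\imath_F x = -x$ then $F(x,-x)=0$ as an element of $R\lbr x\rbr$. Writing $F(x,y)\in R\lbr x,y\rbr$ and substituting $y=-x$, the hypothesis says that $F(x,y)$ vanishes on the ``diagonal'' $y=-x$; equivalently $x+y$ divides $F(x,y)$ in $R\lbr x,y\rbr$, so $F(x,y)=(x+y)h(x,y)$ for some $h$. Conversely, if $F(x,y)=(x+y)h(x,y)$, then $F(x,-x)=0$; but $\imath_F x$ is the unique power series with $F(x,\imath_F x)=0$ and no constant term (existence and uniqueness of the formal inverse), and $-x$ is such a series, so $\imath_F x=-x$, i.e.\ $\mu_F(x)=1$. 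The main point to be careful about here is the divisibility claim: that a power series in two variables vanishing on $y=-x$ is divisible by $x+y$. This is the one genuinely non-formal step, and I would handle it by the change of variables $u=x+y$, $v=x$ (an automorphism of $R\lbr x,y\rbr$ since it is triangular with unit diagonal), under which the vanishing on $y=-x$ becomes vanishing at $u=0$, i.e.\ $F$ lies in the ideal $(u)=(x+y)$; then $h$ is recovered by dividing, and one checks $h$ is a well-defined power series because each coefficient is determined by finitely many coefficients of $F$.

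The main obstacle, such as it is, is purely bookkeeping: making sure the ``division by $x+y$'' argument is phrased so that the quotient genuinely lies in $R\lbr x,y\rbr$ (not some localization), and making sure the uniqueness of the formal inverse is invoked correctly in the converse direction so that $\imath_F x = -x$ can be concluded rather than merely ``$-x$ is \emph{a} solution''. Both are standard, so the proof is short. I would present it as: (1) the displayed identity $x\,\mu_F(x)\,\kappa^F(x)=\mu_F(x)-1$ giving $\kappa^F(x)=0\iff\mu_F(x)=1$; (2) $\mu_F(x)=1\iff\imath_F x=-x\iff F(x,-x)=0$, using uniqueness of the formal inverse for the middle step; (3) $F(x,-x)=0\iff (x+y)\mid F(x,y)$ via the triangular change of variables.
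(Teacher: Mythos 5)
Your proof is correct and complete; the paper itself does not reprove this lemma but simply cites \cite[Lem.~3.3]{HMSZ}, and your argument (the identity $x\,\mu_F(x)\,\kappa^F(x)=\mu_F(x)-1$ together with regularity of $x$ in $R\lbr x\rbr$, uniqueness of the formal inverse, and divisibility by $x+y$ via the triangular change of variables) is exactly the standard route taken there. No gaps.
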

If $\kappa^F(x)\neq 0$, since $\imath_Fx=-x+ax^2+x^3g(x)$ with $g(x)\in R\lbr x \rbr$,   
$$\kappa^F(x)=\tfrac{ax^2+x^3g(x)}{x(-x+ax^2+x^3g(x))}=\tfrac{a+xg(x)}{-1+ax+x^2g(x)},$$
we know that  $\kappa^F(x)$ is invertible in $R\lbr x\rbr$  if and only if $a$ is invertible  in $R$, and $\kappa^F(x)$ is regular if $a$ is regular \cite[Lemma 12.3]{CZZ}.

\begin{definition}\label{def:kappa} We say $\kappa^F=0$ (resp. $\ka^F$ is regular) if $\ka^F(x)=0$  (resp. if $\kappa^F(x)$ is regular).
\end{definition}

\begin{example} \cite[Example 2.2]{HMSZ}
(1) The additive FGL $F_a=x+y$  and the Lorentz FGL $F_l=\tfrac{x+y}{1+\beta xy}$ with $\beta\in R$ satisfy that $\kappa^F=0$.

(2) The multiplicative FGL $F_m(x,y)=x+y+a xy$ with $a$ regular in $R$ satisfy that $\kappa^F$ is regular, and  we have $\imath_{F_m}(x)=\tfrac{u}{-a u-1}$, $\mu_{F_m}(x)=\tfrac{1}{1+a u}$ and $\kappa^{F_m}(x)=-a$.

(3) Let $E$ be the elliptic curve defined by the Tate model:
$$v=u^3+a_1uv+a_2u^2v+a_3v^2+a_4uv^2+a_6v^3,$$
then the elliptic FGL defined by $E$ is 
$$F_e(u,v)=u+v-a_1uv-a_2(u^2v+uv^2)-2a_3(u^3v+uv^3)+(a_1a_2-3a_3)u^2v^2+...$$
over the ring $R=\Z[a_1,a_2,a_3,a_4,a_6]$, and $\kappa^{F_e}$ is regular.

(4) There is a universal formal group law $F_u$ over the Lazard ring $\mathbb{L}$, which is a polynomial ring over $\Z$ with infinitely number of variables. Moreover, any formal group law $(R,F)$ is given by a unique ring homomorphism $\phi:\mathbb{L}\to R$ such that $F(x,y)=\phi (F_u(x,y))$. 
\end{example}

\begin{definition} Let $\La$ be a free abelian group of rank $n$.  Consider the polynomial ring $R[x_{\La}]$ in variables $x_\la$ with $\la\in \La$. Let $$\ep:R[x_{\La}]\to R, \quad x_{\la}\mapsto 0$$ be the augmentation map, and let $R\lbr x_{\La}\rbr $ be the ($\ker\ep$)-adic completion of $R[x_{\La}]$. Let $\calJ_F$ be the closure of the ideal of $R\lbr x_\La\rbr $ generated by $x_0$ and elements of the form $x_{\la_1+\la_2}-F(x_{\la_1}, x_{\la_2})$ for all $\la_1, \la_2\in \La$. Here $x_0\in R[x_{\La}]$ is the element determined by the zero element of $\La$. The \textit{formal group algebra } $R\lbr \La\rbr _F$ is defined to be the quotient
$$R\lbr \La\rbr _F=R\lbr x_{\La}\rbr /\calJ_F.$$

The augmentation map  induces a ring homomorphism $ \ep: R\lbr \La\rbr _F\to R$, and we denote the kernel by $\IF$. Then we have a filtration of  ideals
$$R\lbr \La\rbr _F=\IF^0\supseteq \IF^1\supseteq \IF^2\supseteq \cdots$$
and the associated graded ring
$$Gr_R(\La, F):=\bigoplus_{i=0}^{\infty}\IF^i/\IF^{i+1}.$$
For simplicity, we assume that $\IF^{-n}=R\lbr \La\rbr _F$ for $n>0$.
By \cite[Lemma 4.2]{CPZ}, $Gr_R(\La, F)$ is isomorphic to $S_R^*(\La)$,  the symmetric algebra. The isomorphism sends $\prod x_{\la_i}$ to $\prod \la_i$. Moreover, if $\{w_i\}_{i=1}^n$ is a basis of $\La$,  then $\FA\cong R\lbr w_1,...,w_n\rbr $.

\end{definition}

\medskip
We recall the notation of root datum in \cite[\S2]{CZZ}. Let $\La$ be a lattice and $\Sigma\subsetneq \La$ be a non-empty finite subset. A \textit{root datum} is an embedding $\Sigma\hookrightarrow \La^\vee, \al\mapsto \al^\vee$ of $\Sigma$ into  the dual of $\La$ satisfying certain axioms. The elements of $\Sigma$ are called \textit{roots}, and the sublattice $\La_r$ of $\La$ generated by $\Sigma$ is called the \textit{root lattice}. The dimension of $\La_\Q$ is called the \textit{rank} of the root datum. The root datum is called \textit{irreducible} if it is not direct sum of root data of smaller ranks. The set $\La_w:=\{w\in \La_ \Q|\al^\vee(w)\in \Z\text{ for all } \al\in \Sigma\}$ is called the \textit{weight lattice}. We assume that the root datum is  \textit{semisimple}, i.e., $\La_r$ and $\La$ have the same rank. Then we have $\La_r\subseteq \La\subseteq \La_w$. If $\La=\La_w$ (resp. $\La=\La_r$), then the root datum is called \textit{simply connected} (resp. \textit{adjoint}), and is denoted by $\mathcal{D}_n^{sc}$ (resp. $\mathcal{D}_n^{ad}$), where $\mathcal{D}=A,B,C,D,E,F,G$ is one of the Dynkin types and $n$ is the rank. An irreducible root datum is uniquely determined by its Dynkin type and the lattice $\La$.

We assume that the rank of the root datum is $n$, and fix a basis $\Pi:=\{\al_1,...,\al_n\}\subsetneq \Sigma$. Denote $[n]=\{1,...,n\}$. We call $\Pi$ the set of \textit{simple roots}. Then each root $\al$ is an integral linear combination of $\al_i$ with coefficients either all  positive or all  negative, and we have a decomposition $\Sigma=\Sigma_-\sqcup \Sigma_+$ where $\Sigma_+$ (resp. $\Sigma_-$) is called the set of \textit{positive} (resp. \textit{negative}) roots. The reflection $\La \to \La, \la\mapsto \la-\al^\vee( \la)\al$ is denoted by $s_\al$, and the  group $W$ generated by $s_\al, \al\in \Sigma$ is called the \textit{Weyl group}.  Let $\ell$ be the length function with respect to the set of simple roots $\Pi$,  and let $m_{ij}$ be the order of $s_is_j$ in $W$ for $i\neq j$. Let $w_0$ be the longest element in $W$, and let $N$ be its length.

Let $\ttt$ be the \textit{torsion index} of the root datum, whose prime divisors are the torsion primes of the associated simply connected root datum together with the prime divisors of $|\La_w/\La|$.  Clearly $|\La_w/\La|$ divides $|\La_w/\La_r|$. We copy the table from \cite[\S2]{CZZ} (see \cite{Dem} for the definition of torsion primes). 
\begin{center}
\begin{tabular}{c|c|c|c|c|c|c|c|c|c}
Type (s. connected) & $A_l$ & $B_l, l\ge 3$ & $C_l$ & $D_l, l\ge 4$ & $G_2$ & $F_4$ & $E_6$ & $E_7$ & $E_8$\\
$|\La_w/\La_r|$& $l+1$ & 2 & 2 & 4 & 1 & 1 & 3 & 2 & 1\\
Torsion primes & $\emptyset$ & 2 & $\emptyset$ & 2 & 2 & 2,3 & 2,3 & 2,3 & 2,3,5
\end{tabular}
\end{center} 

\begin{definition}\cite[Definition. 4.4]{CZZ}\label{def:sigreg} Let $\Sigma\hookrightarrow \La^\vee$ be a root datum. We say that $R\lbr \La\rbr _F$ is $\Sigma$-regular if for each $\al\in \Sigma$, the element $x_\al$ is regular in $R\lbr \La\rbr _F$.
\end{definition}

By \cite[Lemma 2.2]{CZZ}, $R\lbr \La\rbr _F$ is $\Sigma$-regular if $2$ is regular in $R$ or if the root datum does not contain an irreducible component $C_n^{sc}$. In this paper we always assume that $R\lbr \La\rbr _F$ is $\Sigma$-regular. 

\medskip

The action of the Weyl group $W$  on $\La$ extends to an action on $\FA$ by 
$s_\al(x_\la)=x_{s_\al(\la)}.$
For each root $\al$, define the following operator 
$$\Delta^F_\al(u)=\tfrac{u-s_\al(u)}{x_\al}.$$
By \cite[Corollary 3.4]{CPZ}, for any $u\in \FA$, one has $\Delta^F_\al(u)\in \FA$. In other words, it is an $R$-linear operator on $\FA$, called the \textit{Demazure--Lusztig type operator}. For simplicity, we will write $s_i=s_{\al_i}$ and $\De_i=\De^F_{\al_i}$. If $I=(i_1,...,i_m)$ is a sequence in $[n]$, we define $l(I)=m$ and 
$$\De_I=\De_{i_1}\circ \cdots \circ \De_{i_m}.$$
 We say that $I$ is reduced if $w(I):=s_{i_1}...s_{i_m}$ is reduced in $W$, i.e., when $\ell(w(I))=l(I)$. For $w\in W$, we can choose a reduced decomposition $w=s_{i_1}\ldots s_{i_m}$, then $I_w=(i_1,...,i_m)$ is called a reduced sequence of $w$.  The definition of $\De_{I_w}$ will depend on the choice of $I_w$, unless $F$ is the additive or a multiplicative FGL. We denote $\kappa_\al^F=\kappa^F(x_\al)=\tfrac{1}{x_\al}+\tfrac{1}{x_{-\al}}$.

\begin{lemma}\cite{CPZ} \label{lem:DLa}Let $u,v\in R\lbr \La\rbr _F$, $\al\in \Sigma$ and $I$ be a sequence in $[n]$, then 
\begin{itemize}
\item[(1)] $\De_\al(1)=0$ and $x_\al\De_{\al}(u)=u-s_\al(u)$.
\item[(2)] $\De_\al^2(u)=\kappa_\al\De_\al(u)$, $s_\al\De_\al(u)=-\De_{-\al}(u)$ and $\De_\al s_\al(u)=-\De_\al(u)$.
\item[(3)] $\De_\al(uv)=\De_\al(u)v+s_\al(u)\De_\al(v)=\De_\al(u)v+\De_\al(v)u-x_\al\De_\al(u)\De_\al(v)$.
\item[(4)] For any $w\in W$, $w\De_\al w^{-1}=\De_{w(\al)}$.
\item[(5)] $\De_I$ is $R\lbr \La\rbr _F^W$-linear and functorial with respect to maps of rings $f:R\to R'$.
\item[(6)] We have $\De_I(\IF^i)\subseteq \IF^{i-l(I)}$. Moreover, $\De_I(\IF^i)\subseteq\IF^{i-l(I)+1}$ if $I$ is not reduced.
\end{itemize}
\end{lemma}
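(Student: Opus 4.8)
The plan is to treat (1)--(4) as routine computations with the defining formula $\De_\al(u)=(u-s_\al(u))/x_\al$, which lies in $\FA$ by \cite[Cor.~3.4]{CPZ} and is unambiguous since $x_\al$ (and $x_{-\al}$) is regular, by the standing $\Sigma$-regularity hypothesis. I would use that $s_\al$ is a ring automorphism of $\FA$ with $s_\al(x_\la)=x_{s_\al(\la)}$ and $s_\al^2=\mathrm{id}$, that $s_{-\al}=s_\al$, and that the $W$-action satisfies $w(x_\la)=x_{w(\la)}$ while $w s_\al w^{-1}=s_{w(\al)}$ in $W$. First I would record (1), which is immediate. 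For (2) I would compute straight from the formulas $s_\al\De_\al(u)=(s_\al(u)-u)/x_{-\al}=-\De_{-\al}(u)$ and $\De_\al s_\al(u)=(s_\al(u)-u)/x_\al=-\De_\al(u)$, and then combine: $\De_\al^2(u)=(\De_\al(u)-s_\al\De_\al(u))/x_\al=(\De_\al(u)+\De_{-\al}(u))/x_\al=(u-s_\al(u))\kappa_\al/x_\al=\kappa_\al\De_\al(u)$. For (3) I would split the numerator as $uv-s_\al(u)s_\al(v)=(u-s_\al(u))v+s_\al(u)(v-s_\al(v))$, and then substitute $s_\al(u)=u-x_\al\De_\al(u)$ from (1) to pass to the second expression. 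For (4) I would conjugate the defining formula and apply $ws_\al w^{-1}=s_{w(\al)}$ together with $w(x_\al)=x_{w(\al)}$.

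For (5), the $\FA^W$-linearity follows from (3): if $c\in\FA^W$ then $\De_\al(c)=0$ and $s_\al(c)=c$, so $\De_\al(cu)=c\De_\al(u)$, and composing gives the statement for $\De_I$. Functoriality I would deduce from the observation that the whole construction --- the ring $\FA$, its augmentation ideal and the powers $\IF^i$, the $W$-action, the elements $x_\al$, and hence the operators $\De_I$ --- is natural in $R$: a ring map $f\colon R\to R'$ carries $F$ to a formal group law over $R'$ and induces a map of formal group algebras intertwining all of this structure.

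Part (6) is where the real work is. The first inclusion $\De_I(\IF^i)\subseteq\IF^{i-l(I)}$ I would reduce, by induction on $l(I)$, to the case $l(I)=1$, i.e.\ $\De_\al(\IF^i)\subseteq\IF^{i-1}$: since $\ep\circ s_\al=\ep$, the automorphism $s_\al$ preserves every $\IF^i$, so $u\in\IF^i$ gives $u-s_\al(u)\in\IF^i$, and because $x_\al$ is regular with leading term the (non-zero-divisor) linear form $\al$ in $Gr_R(\La,F)\cong S_R^*(\La)$, the equation $x_\al\De_\al(u)=u-s_\al(u)$ forces $\De_\al(u)\in\IF^{i-1}$. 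For the ``moreover'' clause I would pass to the associated graded ring: passing to leading terms in $x_\al\De_\al(u)=u-s_\al(u)$ and cancelling $\al$ shows that $\De_\al$ induces on $Gr_R(\La,F)\cong S_R^*(\La)$ precisely the classical divided difference operator $\partial_\al(f)=(f-s_\al(f))/\al$, which is homogeneous of degree $-1$; consequently, for $I=(i_1,\ldots,i_m)$ the leading-term map $\IF^i/\IF^{i+1}\to\IF^{i-m}/\IF^{i-m+1}$ induced by $\De_I$ is the composite $\partial_{i_1}\circ\cdots\circ\partial_{i_m}$ on $S_R^*(\La)$. Since the $\partial_i$ satisfy the nil-Coxeter relations ($\partial_i^2=0$ together with the braid relations of length $m_{ij}$), this composite vanishes whenever $I$ is not reduced, which yields $\De_I(\IF^i)\subseteq\IF^{i-m+1}$.

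The step I expect to be the main obstacle is the ``moreover'' clause of (6). One must check that the leading-term map of a composite $\De_{i_1}\circ\cdots\circ\De_{i_m}$ really equals the composite of the individual degree-$(-1)$ maps on $Gr_R(\La,F)$ --- this works because each $\De_{i_k}$ drops the filtration by at most one, so passing to $Gr$ loses nothing at each stage --- and one must invoke the integrality and the nil-Coxeter relations of the divided difference operators on $S_R^*(\La)$; these are classical, but they are exactly what makes the non-reduced case collapse a further degree beyond the naive bound from the first part.
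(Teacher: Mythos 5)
Your argument is correct, and it is essentially the proof of the cited source: the paper itself gives no proof of this lemma but refers to \cite{CPZ}, where (1)--(5) are the same direct computations and (6) is obtained exactly as you do, by passing to $Gr_R(\La,F)\cong S_R^*(\La)$, identifying $\GR\De_\al$ with the classical operator $\De_\al^{F_a}$ (this is \cite[Prop.~4.11]{CPZ}, quoted in the paper right after the lemma), and invoking the vanishing of non-reduced composites of divided difference operators. The only point worth making explicit is your parenthetical claim that the linear form $\al$ is a non-zero-divisor in $S_R^*(\La)$; this is guaranteed by the same hypothesis that gives $\Sigma$-regularity ($2$ regular in $R$ or no component $C_n^{sc}$), and is needed for the division step in the first half of (6).
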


The operator $\De_\al$ induces an operator $\GR\De_\al$ on $\GR^*_R(\La,F)$ of degree $-1$. If we denote by $\De_\al^{F_a}$ the classical Demazure--Lusztig operator on $S_R^*(\La)$, i.e., 
$$\De_\al^{F_a}(\la)=\tfrac{\la-s_\al(\la)}{\al},\quad \la\in \La,$$ then by \cite[Proposition 4.11]{CPZ}, $\GR\De_\al$ exchanges with the operator $\De_\al^{F_a}$ via the isomorphism $\GR^*_R(\La, F)\cong S_R^*(\La)$. 

Let $w_0$ be the longest element in $W$ and let $I_0$ be a reduced sequence of $w_0$. Let $N$ be its length.  By \cite[\S5]{CPZ}, there exists $u_0\in \IF^N$ such that $\De_{I_0}(u_0)\equiv \ttt\mod \IF$. Moreover, 

\begin{lemma} \label{lem:u0} The element $u_0$ satisfies the following properties:
\begin{itemize}
\item[(1)] If $l(I)\le N$, then $\ep\De_{I}(u_0)=0$ unless $l(I)=N$ and $I$ is reduced, in which case $\De_I(u_0)\equiv \ttt\mod\IF$.
\item[(2)] If $\ttt$ is invertible in $R$, then the matrix $(\De_{I_v}\De_{I_w}(u_0))_{(v,w)\in W\times W}$ is invertible. 
\end{itemize}
\end{lemma}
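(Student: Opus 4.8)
The plan is to derive both parts from the filtration estimates of Lemma~\ref{lem:DLa}(6), the comparison of $\GR\De_\al$ with the classical divided difference operator $\De^{F_a}_\al$, and, for part~(2), a block upper-triangularity argument after reducing modulo $\IF$.

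\emph{Part (1).} Put $m=l(I)\le N$. If $I$ is not reduced, then by Lemma~\ref{lem:DLa}(6) we have $\De_I(u_0)\in\De_I(\IF^N)\subseteq\IF^{N-m+1}\subseteq\IF$, so $\ep\De_I(u_0)=0$; if $I$ is reduced but $m<N$, then $\De_I(u_0)\in\IF^{N-m}\subseteq\IF$, so again $\ep\De_I(u_0)=0$. Now suppose $I=(i_1,\dots,i_N)$ is reduced of length $N$; then $\ell(w(I))=N$, so $w(I)=w_0$. Each $\De_{i_j}$ sends $\IF^i$ into $\IF^{i-1}$, so $\De_I$ sends $\IF^i$ into $\IF^{i-N}$ and induces a map $\IF^N/\IF^{N+1}\to\IF^0/\IF^1=R$ carrying the class $\bar u_0$ of $u_0$ to $\ep\De_I(u_0)$; under the isomorphism $\GR^*_R(\La,F)\cong S^*_R(\La)$ this map is $\GR\De_{i_1}\circ\cdots\circ\GR\De_{i_N}$, which by \cite[Prop.~4.11]{CPZ} corresponds to $\De^{F_a}_{i_1}\circ\cdots\circ\De^{F_a}_{i_N}$. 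Since the operators $\De^{F_a}_i$ satisfy $(\De^{F_a}_i)^2=0$ and the braid relations, this composite depends only on $w(I)=w_0$, hence agrees with the analogous composite for the fixed reduced sequence $I_0$. Evaluating at $\bar u_0$ gives $\ep\De_I(u_0)=\ep\De_{I_0}(u_0)=\ttt$, i.e.\ $\De_I(u_0)=\ttt+\IF$.

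\emph{Part (2).} I first observe that a square matrix over $\FA$ is invertible if and only if its entrywise reduction modulo $\IF$ is invertible over $R=\FA/\IF$: given a matrix $A$ whose reduction is invertible, choose any matrix $B$ over $\FA$ reducing to the inverse of that reduction, so that $BA=\bfone+C$ with all entries of $C$ in $\IF$; as $\FA\cong R\lbr w_1,\dots,w_n\rbr$ is complete and separated for the $\IF$-adic topology, $\sum_{k\ge 0}(-C)^k$ converges to an inverse of $\bfone+C$, so $A$ has a left inverse, and symmetrically a right inverse. Thus it suffices to prove that $M:=\bigl(\ep\De_{I_v}\De_{I_w}(u_0)\bigr)_{(v,w)\in W\times W}$ is invertible over $R$. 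Since $\De_{I_v}\De_{I_w}=\De_{I_vI_w}$ with $l(I_vI_w)=\ell(v)+\ell(w)$ and $w(I_vI_w)=vw$, part~(1) gives: $M_{v,w}=0$ whenever $\ell(v)+\ell(w)<N$; and when $\ell(v)+\ell(w)=N$ we get $M_{v,w}=\ttt$ if $vw=w_0$ (the concatenation $I_vI_w$ is then reduced of length $N$) and $M_{v,w}=0$ otherwise.

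Finally, write $W_k=\{v\in W:\ell(v)=k\}$, order the rows of $M$ by the blocks $W_0,W_1,\dots,W_N$ and the columns by $W_N,W_{N-1},\dots,W_0$. For each $k$ the map $v\mapsto v^{-1}w_0$ is a bijection $W_k\to W_{N-k}$ (because $\ell(v^{-1}w_0)=N-\ell(v)$), and $vw=w_0$ forces $\ell(v)+\ell(w)=N$. Hence the block of $M$ on rows $W_k$ and columns $W_j$ vanishes whenever $k+j<N$, that is, strictly below the ``diagonal'' blocks (rows $W_k$, columns $W_{N-k}$), while the diagonal block on rows $W_k$, columns $W_{N-k}$ equals $\ttt$ times the permutation matrix of $v\mapsto v^{-1}w_0$. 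As $\ttt$ is invertible in $R$, $M$ is block upper-triangular with invertible diagonal blocks, hence invertible; therefore $\bigl(\De_{I_v}\De_{I_w}(u_0)\bigr)_{(v,w)}$ is invertible over $\FA$. I expect the one genuinely nontrivial point to be the length-$N$ case of part~(1): the independence of $\ep\De_I(u_0)$ of the chosen reduced word for $w_0$ rests precisely on passing to $\GR^*_R(\La,F)$ and invoking the braid relations for the classical Demazure operators; everything else is filtration bookkeeping and the triangularity argument.
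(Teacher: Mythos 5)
Your proof is correct and follows essentially the same route the paper takes (the paper cites \cite[\S5]{CPZ} for this lemma and reproduces exactly this filtration-plus-triangularity argument in its proof of the analogous Lemma \ref{lem:u0tau}): part (1) via Lemma \ref{lem:DLa}(6) and the comparison of $\GR\De_\al$ with the classical operators $\De_\al^{F_a}$, whose braid relations give independence of the reduced word, and part (2) via reduction modulo $\IF$ and block triangularity with diagonal blocks $\ttt$ times a permutation matrix. The only blemish is a labeling slip: with rows ordered by increasing and columns by decreasing length, the vanishing blocks $k+j<N$ lie \emph{above} the diagonal, so the matrix is block \emph{lower} triangular (as in the paper's proof of Lemma \ref{lem:u0tau}); this does not affect the invertibility conclusion.
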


\section{Formal affine Demazure algebra and formal affine Hecke algebra}
In this section we recall the definitions and properties of the formal affine Demazure algebra $\DF$ and the formal affine Hecke algebra $\HF$ in \cite{HMSZ} and \cite{CZZ}.

Let $Q^F=\FA[\tfrac{1}{x_\al}|\al\in \Sigma]$, i.e., the localization of $\FA$ at $\{x_\al|\al\in \Sigma\}$. The action of $W$ on $\FA$ extends to an action on $Q^F$. Define the \textit{twisted formal group algebra} to be the smash product $Q_W^F:=Q^F\sharp_R R[W]$. That is,  $Q_W^F$ is $Q^F\otimes_R R[W]$ as an $R$-module, and the multiplication is given by 
$$q\de_w \cdot  q'\de_{w'}=qw(q')\de_{ww'}, \quad w,w'\in W ,  q,q'\in Q^F.$$
Notice that $Q^F_W$ is a free (left and right) $Q^F$-module with basis $\{\de_w\}_{w\in W}$, but it is not a $Q^F$-algebra since  the embedding $Q^F\to Q^F_W, q\mapsto q\de_e$ is not central in $Q^F_W$. Here $e\in W$ is the identity in $W$, and we denote $\de_e$ by 1.

For each root $\al$,  we  define the \textit{Demazure element} 
$$X_{\al}^F:= \tfrac{1}{x_{\al}}(1-\de_{s_{\al}})=\tfrac{1}{x_{\al}}-\de_{s_{\al}}\tfrac{1}{x_{-{\al}}}\in Q_W^F.$$

\begin{definition}Define the \textit{formal affine Demazure algebra} $\DF$ to be  the $R$-subalgebra of $Q_W^F$ generated by $\FA$ and $X_{\al}^F$ for all root $\al$.
\end{definition}

If there is no confusion, we will denote  $X_i=X_{\al_i}^F$ and $\de_i=\de_{s_i}$. 
If $I=(i_1,...,i_l)$ is a sequence in  $[n]$, we write $$X_I=X_{i_1i_2...i_l}=X_{i_1}\cdots X_{i_l}.$$ If $I_w$ is a reduced sequence of $w$, then $X_{I_w}$ depends on the choice of $I_w$, unless $F$ is an additive  or  a multiplicative FGL \cite[Theorem 3.10]{CPZ}.
For any $i\neq j\in [n]$, we denote $x_{\pm i}=x_{\pm\al_i}$, $x_{i\pm j}=x_{\al_i\pm \al_j}$ and define
\begin{eqnarray}
\kappa_{ij} = \kappa^F_{ij} =\tfrac{1}{x_{i+j}x_j}-\tfrac{1}{x_{i+j}x_{-i}}-\tfrac{1}{x_ix_j},
\end{eqnarray} 
which belongs to $\FA$ by  \cite[Lemma 6.7]{HMSZ}.

In the following proposition, part (7) for any root datum that does not contain an irreducible component of type  $G_2$ and part (1)--(6) for all root data are proved in \cite[\S6]{HMSZ}. Part (7) and part (8) for arbitrary root datum are proved in \cite[Proposition 7.7 and Lemma 5.8]{CZZ}. The latter proof is  uniform for root datum of any type, but does not give the relations explicitly. 

\begin{prop}\label{prop:demazure} Suppose that $\FA$ is $\Sigma$-regular. The algebra $\DF$ satisfies the following properties:

(1) For any $q\in Q^F$, we have 
$qX_\al=X_\al s_\al(q)+\De_\al(q)$.

(2) For any root $\al$, we have $X_\al^2=\kappa_\al X_\al$.

(3) For any $i\neq j$, if $(\al_i^\vee,\al_j)=0$, so that $m_{ij}=2$, then $X_{ij}=X_{ji}$.

(4) For any $i\neq j$, if $(\al_i^\vee,\al_j)=(\al_j^\vee, \al_i)=-1$, so that $m_{ij}=3$, then $$X_{jij}-X_{iji}=\kappa_{ij}X_i-\kappa_{ji}X_{j}.$$

(5) For any $i\neq j$, if $(\al_i^\vee, \al_j)=-1$ and $(\al_j^\vee, \al_i)=-2$, so that $m_{ij}=4$, then 
\begin{gather*}X_{jiji}-X_{ijij}\\
=(\kappa_{i+j, j}+\kappa_{ij})X_{ij}-(\kappa_{i+2j, -j}+\kappa_{ji})X_{ji}+\De_i(\kappa_{i+j,j}+\kappa_{ij})\cdot X_j+\De_{j}(\kappa_{i+2j, -j}+\kappa_{ji})X_i.
\end{gather*}

(6) If the root datum does not contain any component of type $G_2$, then as an $R$-algebra,  $\DF$ is generated by $\FA$ and $X_i, i\in [n]$ with relations  (1) -- (5)  above.

(7) For each $w\in W$, choose a reduced decomposition of $w$ and define $X_{I_w}$ correspondingly. Then   $\{X_{I_w}\}_{w\in W}$ is a basis of $\DF$ as a left $\FA$-module. 

(8) For any root datum, $\DF$ is the $R$-subalgebra of $Q_W$ generated by $\FA$ and by $X_\al$ for all roots $\al$.
\end{prop}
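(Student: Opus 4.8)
The plan is to prove (1)--(8) in order, working throughout inside the explicit ring $Q_W^F=\bigoplus_{w\in W}Q^F\de_w$, in which $X_\al=\tfrac{1}{x_\al}(1-\de_{s_\al})=\tfrac{1}{x_\al}-\de_{s_\al}\tfrac{1}{x_{-\al}}$. Parts (1) and (2) are one-line computations there. For (1), expand $qX_\al=\tfrac{q}{x_\al}(1-\de_{s_\al})$ and $X_\al s_\al(q)+\De_\al(q)=\tfrac{1}{x_\al}\bigl(s_\al(q)-q\de_{s_\al}\bigr)+\tfrac{q-s_\al(q)}{x_\al}$, using $\de_{s_\al}s_\al(q)=q\de_{s_\al}$; the two sides coincide. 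For (2), multiply out $X_\al^2$ using $\de_{s_\al}^2=1$ and $s_\al(x_{\pm\al})=x_{\mp\al}$; the four terms regroup into $\kappa_\al\tfrac1{x_\al}-\kappa_\al\de_{s_\al}\tfrac1{x_{-\al}}=\kappa_\al X_\al$, where $s_\al(\kappa_\al)=\kappa_\al$ lets $\kappa_\al$ commute past $\de_{s_\al}$.

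The braid relations (3)--(5) involve only the rank-$\le2$ parabolic subgroup $W_{ij}=\langle s_i,s_j\rangle$, so I would reduce to the rank-$2$ root data of type $A_1\times A_1$, $A_2$ and $B_2=C_2$; the type $G_2$, where $m_{ij}=6$, is deliberately left out of (4)--(5). In these cases $Q_{W_{ij}}^F$ is $Q^F$-free on the finite basis $\{\de_v\}_{v\in W_{ij}}$, so I would write both sides of each identity in that basis and compare coefficients, which are rational functions in $x_{\pm\al_i}$, $x_{\pm\al_j}$, $x_{\al_i\pm\al_j}$ (and $x_{\al_i+2\al_j}$ in the $B_2$ case). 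The coefficient of $\de_{w_{ij}}$, where $w_{ij}$ is the longest element of $W_{ij}$, agrees on both sides, because the $\de_w$-coefficient of $X_{I_w}$ depends only on $w$ (it is a product of terms $\pm x_\gamma^{-1}$ over the inversions $\gamma$ of $w$, hence a unit of $Q^F$); so the difference of the two sides lies in $\sum_{\ell(v)<m_{ij}}Q^F\de_v$, and using the definition of $\kappa_{ij}$ together with the Leibniz rule of Lemma~\ref{lem:DLa}(3) one identifies its coefficients with those of the stated right-hand side, checking crucially that they lie in $\FA$ and not merely in $Q^F$. This integrality point and the bookkeeping around it are exactly where the computations of \cite[\S5]{HMSZ} are used. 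I expect part (5) to be the main obstacle: it is the one genuinely long calculation, with the length-$4$ products and the shifted-root $\kappa$'s to keep straight.

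For (7) I would first prove $\FA$-linear independence. Expanding a reduced product $X_{I_w}=X_{i_1}\cdots X_{i_l}$ in the basis $\{\de_v\}$, the only term producing $\de_w$ comes from selecting the $\de$-summand of every factor (a proper subword of a reduced word for $w$ gives a Weyl element $<w$ in Bruhat order), and its coefficient is a unit of $Q^F$; every other $\de_v$ that occurs has $v<w$. Hence the transition matrix between $\{X_{I_w}\}$ and $\{\de_v\}$ is triangular with invertible diagonal, so $\{X_{I_w}\}_{w\in W}$ is even a $Q^F$-basis of $Q_W^F$ and a fortiori $\FA$-linearly independent; $\Sigma$-regularity enters here, through the injection $\FA\hookrightarrow Q^F$. (When $\ttt$ is invertible, independence may instead be read off the nondegenerate pairing of Lemma~\ref{lem:u0}(2).) For spanning, set $M=\sum_w\FA X_{I_w}$: relation (1) shows $M$ is closed under left multiplication by $\FA$ and by each $X_i$, and it is closed under right multiplication by each $X_i$ by the standard word-reduction on Coxeter groups (relation (2) reduces non-reduced words, and a braid move from (3)--(5) contributes only corrections that are $\FA$-combinations of shorter $X_{I_v}$), so an induction on $\ell(w)$ closes. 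Thus $M$ is a subalgebra containing $\FA$ and all $X_i$, so it contains every $X_\al=\de_wX_{\al_i}\de_w^{-1}$ (note $\de_{s_i}=1-x_iX_i\in M$), giving $M=\DF$ and hence (7). Then (6) follows: the $R$-algebra presented by $\FA$ and the $X_i$ subject to (1)--(5) surjects onto $\DF$ and, by the same reduction, is spanned over $\FA$ by the $X_{I_w}$, so comparison with the freeness just established forces this surjection to be an isomorphism. For $G_2$ the relations (1)--(5) are not complete, so (6) is stated only for the remaining types, while (7) in the $G_2$ case instead follows from the uniform but less explicit argument of \cite[Prop. 7.7]{CZZ}; by \cite[Prop. 5.8]{HMSZ} the regularity of $\ttt$ assumed there is unnecessary.

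Finally (8) records that $\DF$ is the same algebra whether generated by $\FA$ together with the simple $X_{\al_i}$ or by $\FA$ together with all $X_\al$: from $\de_{s_i}=1-x_iX_i$, the subalgebra generated by $\FA$ and the $X_{\al_i}$ already contains every $\de_w$, hence every $X_\al=\de_wX_{\al_i}\de_w^{-1}$ with $\al=w(\al_i)$, and every root is $W$-conjugate to a simple one. This reasoning is uniform in the type and so covers $G_2$ as well.
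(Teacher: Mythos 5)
Your proposal is correct and follows essentially the same route as the sources the paper itself defers to: the paper gives no proof of this proposition beyond citing \cite[\S5]{HMSZ} for (1)--(6) and (7) outside type $G_2$, and \cite[Prop.~7.7, Lem.~5.8]{CZZ} for (7) in type $G_2$ and for (8), and your sketch (direct computation in $Q_W^F$ for (1)--(5) with the integrality of the $\kappa_{ij}$-coefficients as the key point, the triangular transition matrix to $\{\de_w\}$ plus word reduction for (6)--(7), and conjugation via $\de_{s_i}=1-x_iX_i$ for (8)) is a faithful outline of exactly those arguments, including the correct deferral of the $G_2$ case of (7) to the uniform argument of \cite{CZZ}.
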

\begin{remark} The paper \cite{HMSZ} treats $\DF$ as a right $\FA$-module while we think  it as a left $\FA$-module. One could use the identity in Proposition \ref{prop:demazure}.(1) to transform the coefficients from left to right. 
\end{remark}
\medskip
\begin{definition}\cite{HMSZ} The \textit{Hecke algebra} $H$ of $W$ is the $\Z[t,t^{-1}]$-algebra generated by $T_i$ with $i\in [n]$ with the relations:
\begin{itemize}
\item[(1)] $(T_i+t^{-1})(T_i-t)=0$ for all $i\in [n]$.
\item[(2)] The braid relation $T_iT_jT_i\cdots =T_jT_iT_j\cdots$ ($m_{ij}$ factors on both sides) for any $i\neq j$ with $s_is_j$ of order $m_{ij}$ in $W$.
\end{itemize} 
The \textit{affine Hecke algebra} is $\Z[t,t^{-1}][\La_w]\otimes_{\Z[t,t^{-1}]}H$ containing both $\Z[t,t^{-1}][\La_w]$ and $H$ as subalgebras, and the commuting relations are 
$$e^\al T_i-T_i e^{s_i(\la)}=(t-t^{-1})\tfrac{e^\la-e^{s_i(\la)}}{1-e^{-\al_i}}.$$
\end{definition}
\medskip
In the remaining part of this paper, we assume that either $\kappa^F=0$ or $\ka^F$  is regular (the latter being satisfied if  $a$ is regular in $R$). In the latter case we have $\ka_\al^F$ is regular for any root $\al$, because $\FA$ is assumed to be $\Sigma$-regular. We work under this assumption because if $\kappa^F$ is not regular, then the $\FFAK$ defined below is 0 and everything in the remaining part of this paper becomes trivial. 

We recall the definition of the formal affine Hecke algebra. Fix a free abelian group $\Ga$ of rank 1 and denote $R_F=R\lbr \Ga\rbr _F$. Let $\gamma$ be a generator of $\Gamma$ and denote by $x_\ga$ the corresponding element in $R_F$. Define  $\Th_F=\mu_F(x_\ga)-\mu_F(x_{-\ga})$. For simplicity, we write $\mu=\mu_F({x_\ga})$ and $\Th=\Th_F$. We fix some notation:
$$ R_F\lbr \La\rbr _F^\kappa=\left\{\begin{array}{ll}R_F\lbr \La\rbr _F & \text{if }\kappa^F=0,\\
                                            R_F\lbr \La\rbr _F[\tfrac{1}{\kappa_\al}|\al\in \Sigma] & \text{if }\kappa^F \text{ is regular}. \end{array}\right.~ \vartheta_\al:=\left\{\begin{array}{ll}      
                               {2x_\ga}, & \text{if }\kappa^F=0,\\
                               \tfrac{\Th}{\kappa_\al}, & \text{if }\kappa^F \text{ is regular}.
                               \end{array}\right. ~$$
  Denote $\vartheta_i=\vartheta_{\al_i}$. Let 
  $(Q^F)'= R_F\lbr \La\rbr _F^\kappa[\tfrac{1}{x_\al }|\al\in \Sigma]$.  Recall that  $F$ satisfies that either $\kappa^F=0$ or $\kappa^F$ is regular, so  $\FFAK\subseteq (Q^F)'$. Let $(Q_W^F)'$ be the corresponding twisted formal group algebra defined over $R_F$, i.e., $(Q^F_W)'=(Q^F)'\sharp_{R_F} R_F[W]$. For each  root $\al$, define the element 
 $$T_\al^F:=\vartheta_{\al} X_\al+\mu\de_\al\in (Q_W^F)'.$$
 \begin{definition} Define the \textit{formal affine Hecke algebra} $\HF$ to be  the $R_F$-subalgebra of $(Q_W^F)'$ generated by $\FFAK$ and $T_{\al_i}^F$ for all simple roots $\al_i$.
 \end{definition}
 \begin{remark} The original definition of $\DF$ and $\HF$ in \cite{HMSZ} were for torsion-free ring $R$ and for $\La=\La_w$ , i.e., for simply connected root data. The paper \cite{CZZ} generalizes the definition of $\DF$ to any  any ring $R$ and any root datum such that  $\FA$ is $\Sigma$-regular (Definition \ref{def:sigreg}), and we mimic this generalization in the present paper for $\HF$. 
 \end{remark}

If the FGL $F$ is clear in the context, we will write $Q'=(Q^F)'$,  $Q_W'=(Q_W^F)'$ and $T_i=T_{\al_i}^F$ for simplicity. Notice that $\Th$, $\mu$ and $\kappa_\al$ are invariant under $s_\al$, so they commute with $X_\al$ and $\de_\al$. We also have  $\de_wT_{\al_i} \de_{w^{-1}}=T_{w(\al_i)}$. 

For a sequence $I=(i_1,...,i_l)$ in $[n]$, we denote 
$$T_I=T_{i_1...i_l}=T_{i_1}\cdot ...\cdot T_{i_l}.$$
For each $w$, we choose a reduced sequence $I_w$. Then  $T_{I_w}$ depends on the choice of $I_w$ unless $F$ is a multiplicative or the additive FGL.  If $\ka^F$ is regular, let 
$$\kappa'_{ij} = (\ka^F_{ij})' = \tfrac{1}{\kappa_{i+j}x_{i+j}\kappa_jx_j}-\tfrac{1}{\ka_{i+j}x_{i+j}\ka_{i}x_{-i}}-\tfrac{1}{\ka_ix_i\ka_jx_j}\in Q'.$$
Unlike $\kappa^F_{ij}$, a priori it is unclear whether $(\kappa^F_{ij})'$ belongs to $\FFAK$ (see Corollary \ref{cor:kappaij'} or Lemma \ref{lemma:2}).
 
 The following proposition is from \cite[\S8]{HMSZ}, which is based on the assumption that $R$ is torsion free and the root datum is simply connected. It is not difficult to check that its proof is purely formal and depends only on the assumption that $\FFA$ is $\Sigma$-regular.  
\begin{prop}\label{prop:hecke}Suppose that  $\FFA$ is $\Sigma$-regular. For each $w\in W$, fix a reduced decomposition and define $T_{I_w}$ correspondingly. The algebra $\HF$  satisfies the following properties:

\begin{itemize}
\item[(1)] For any $q\in Q'$ and $i\in [n]$, we have $qT_i-T_i s_i(q)=\vartheta_i\De_i(q)$.
\item[(2)]   $T_i^2=\Th T_i+1$. In particular, if $\kappa^F=0$, then $T_i^2=1$.
\item[(3)] For any $i\neq j$, we have $$\underbrace{T_{j}T_iT_j\cdots }_{m_{ij} \text{ terms}}-\underbrace{T_iT_jT_i\cdots}_{m_{ij} \text{ terms}} =\sum_{\ell(w)\le m_{ij}-2}\tau^{I_w}_{ij}T_{I_w}$$
for some $\tau^{I_w}_{ij}\in Q'$. 

\item[(4)] For any $i\neq j$, if $(\al_i^\vee, \al_j)=0$, so that $m_{ij}=2$, then $T_{ij}=T_{ji}$.
\item[(5)] for any $i\neq j$, if $(\al_i^\vee, \al_j)=(\al_j^\vee, \al_i)=-1$, so that $m_{ij}=3$, then 
$$T_{jij}-T_{iji}=\ka'_{ij}\Th^2(T_i-T_j).$$
Moreover, $\ka'_{ij}=\ka_{ji}'$.
\end{itemize}
Define $R_F\lbr \La\rbr _F^\sim$ to be the subring of $Q'$ generated by $R_F\lbr \La\rbr _F^\kappa$, and the elements $v(\tau^{I_w}_{ij})$ and $\De_k(\tau^{I_w}_{ij})$ for any $i,j,k\in I$ and $v,w\in W, \ell(w)\le m_{ij}-2$. Let $\HF^\sim $ be the $R_F$-algebra generated by $R_F\lbr \La\rbr _F^\sim$ and $T_i$ for all $i$. 
\begin{itemize}
\item [(6)] Suppose that  the root datum is simply laced, i.e., $m_{ij}\le 3$ for all $i\neq j\in [n]$.  As an $R_F$-algebra, $\HF^\sim$ is generated by $R_F\lbr \La\rbr _F^\sim$ and  $T_i$ for all $i\in [n]$ with relations (1) -- (5) above.
\item[(7)] The set $\{T_{I_w}\}_{w\in W}$ forms a basis of $\HF^\sim$ as a left $R_F\lbr \La\rbr _F^\sim$-module. 

\end{itemize}
\end{prop}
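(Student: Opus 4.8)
The plan is to carry everything out inside the twisted formal group algebra $(Q_W^F)'$, which is free as a left $Q'$-module on $\{\de_w\}_{w\in W}$. The structural point is that $X_i=\frac1{x_i}(1-\de_i)$ gives $\de_i=1-x_iX_i$ and hence
$$T_i=\vartheta_iX_i+\varpi\de_i=\varpi+(\vartheta_i-\varpi x_i)X_i,$$
so each $T_i$ lies in the $\FFAK$-subalgebra of $(Q_W^F)'$ generated by the $X_\al$, for which Proposition \ref{prop:demazure} and Lemma \ref{lem:DLa} supply everything we need. Accordingly, relations (1)--(5) are identities in $(Q_W^F)'$ that I would verify by direct computation, and (6)--(7) follow from them by a standard reduction-plus-triangularity argument.

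For (1), expand $qT_i=\vartheta_i\,qX_i+\varpi\,q\de_i$ (using that $\vartheta_i$, $\varpi$ are $s_i$-invariant, hence commute with $q$), substitute $qX_i=X_is_i(q)+\De_i(q)$ from Proposition \ref{prop:demazure}(1) and $q\de_i=\de_is_i(q)$, and reassemble to get $T_is_i(q)+\vartheta_i\De_i(q)$. For (2), expand $T_i^2$ with $X_i^2=\kappa_iX_i$, $\de_i^2=1$, $X_i\de_i=-X_i$ and $\de_iX_i=\mu_F(x_i)^{-1}X_i$; collecting terms yields $T_i^2=\vartheta_i\big(\vartheta_i\kappa_i+\varpi(\mu_F(x_i)^{-1}-1)\big)X_i+\varpi^2$, and the formal-group-law identities $\mu_F(x)^{-1}=1-x\,\kappa^F(x)$ and $\mu_F(x)\mu_F(\imath_Fx)=1$ reduce this to $T_i^2=1$ when $\kappa^F=0$ (then $\kappa_i=0$, $\varpi=1$) and to $T_i^2=\Th T_i+1$ when $\kappa^F\neq0$ (then $\vartheta_i\kappa_i=\Th$, $\varpi=\mu$). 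For (4), $(\al_i^\vee,\al_j)=0$ gives $s_i(\al_j)=\al_j$, so $\de_i$ commutes with $x_{\al_j}$, $X_j$, $\de_j$, $\vartheta_j$ and symmetrically; together with $X_iX_j=X_jX_i$ (Proposition \ref{prop:demazure}(3)) this makes the expansions of $T_{ij}$ and $T_{ji}$ coincide.

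The braid-type relations I would handle one value of $m_{ij}$ at a time. Part (5) ($m_{ij}=3$) is the core. Expand $T_{jij}$ and $T_{iji}$ in the $Q'$-basis $\{\de_w\}$: their $\de_{w_{ij}}$-coefficients (with $w_{ij}=s_is_js_i=s_js_is_j$) are, after carrying out the $W$-twists, the same product of the quantities $\varpi-\vartheta_\al/x_\al$ over the three positive roots $\al_i$, $\al_j$, $\al_i+\al_j$ of the $A_2$-subsystem, and these commute in the commutative ring $Q'$; so the top term cancels in $T_{jij}-T_{iji}$, and Proposition \ref{prop:demazure}(4), i.e. $X_{jij}-X_{iji}=\kappa_{ij}X_i-\kappa_{ji}X_j$, reduces the difference to a $Q'$-combination of $\de_e,\de_i,\de_j$. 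Matching the surviving coefficients against those of $\Th^2(T_i-T_j)\kappa'_{ij}$ — the element $\kappa'_{ij}\in Q'$ being tailored precisely to make this work — using the identities from (2) and the rank-$2$ relations among $x_{\pm i}$, $x_{\pm j}$, $x_{i\pm j}$, completes the check (and forces the $\de_{s_is_j}$- and $\de_{s_js_i}$-coefficients of the difference to vanish), while $\kappa'_{ij}=\kappa'_{ji}$ is read off from the defining formula of $\kappa'_{ij}$ using the same relations. For (3) in general, $m_{ij}\le3$ is (4) and (5); for $m_{ij}=4$ (types $B_2,C_2,F_4$) one runs the analogous longer computation, now using Proposition \ref{prop:demazure}(5) for the length-$4$ cancellation and collecting the error terms, which lie in $Q'$; for $m_{ij}=6$ (type $G_2$) a hands-on computation is impractical, so instead one deduces the existence of a relation $\underbrace{T_jT_iT_j\cdots}_{m_{ij}}-\underbrace{T_iT_jT_i\cdots}_{m_{ij}}=\sum_{\ell(w)\le m_{ij}-2}\tau_{ij}^{I_w}T_{I_w}$ with $\tau_{ij}^{I_w}\in Q'$ abstractly, from the module structure of $\DF$ (Proposition \ref{prop:demazure}(7)--(8) and the uniform argument of \cite{CZZ}), with no explicit formula. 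This rank-$2$ analysis — performing the top-degree cancellation, extracting the error coefficients, and pinning down that they are only $Q'$-valued (not $\FFAK$-valued), with $G_2$ forcing the indirect route — is the main obstacle; the rest of the paper exists to remove the $Q'$-defect.

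Finally (6) and (7). Work over $R_F\lbr \La\rbr _F^\sim$, which by construction contains every $W$-translate $v(\tau_{ij}^{I_w})$, so relations (1)--(5) make sense and are $W$-stable over it. \emph{Spanning}: by a Matsumoto/Tits-type reduction — use (1) to push all coefficients to the front, the quadratic relation (2) to delete repeated adjacent letters, and the braid-type relations (3) (in the simply-laced case only (4),(5)) to execute braid moves at the cost of strictly shorter words — every monomial in the $T_i$ and the coefficients rewrites as an $R_F\lbr \La\rbr _F^\sim$-combination of the chosen $\{T_{I_w}\}$; induction on word length shows $\{T_{I_w}\}$ spans $R_F\lbr \La\rbr _F^\sim\otimes_\FFAK\HF$ and that the algebra abstractly presented by (1)--(5) surjects onto it. \emph{Independence}: inside $(Q_W^F)'$, which is free over $Q'$ on $\{\de_w\}$ and, by Proposition \ref{prop:demazure}(7), also on $\{X_{I_w}\}$, the transition matrix from $\{T_{I_w}\}$ to $\{X_{I_w}\}$ is triangular for the Bruhat order, with diagonal entry at $w=s_{i_1}\cdots s_{i_l}$ equal to $\prod_{k=1}^{l}s_{i_1}\cdots s_{i_{k-1}}(\vartheta_{i_k}-\varpi x_{i_k})$, a product of elements of $Q'$ which one checks to be nonzero and regular; the triangularity then forces $\{T_{I_w}\}$ to be $R_F\lbr \La\rbr _F^\sim$-linearly independent, giving (7), and with spanning this gives the presentation (6).
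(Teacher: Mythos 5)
The paper offers no proof of this proposition: it is quoted from \cite[\S 7]{HMSZ} (with \cite{CZZ} covering the non--simply-laced and non--simply-connected cases), so you are supplying an argument where the paper gives only a citation. Your reconstruction follows the same strategy as the source: verify everything inside $(Q_W^F)'$ using $T_i=\varpi+(\vartheta_i-\varpi x_i)X_i$, the relations of Proposition \ref{prop:demazure}, and triangularity of the $\{\de_w\}$-expansion. Parts (1), (2) and (4) are correct as computed (the identities $\mu_F(x)^{-1}=1-x\kappa^F(x)$ and $\mu_F(x)\mu_F(\imath_Fx)=1$ are exactly what is needed), the symmetry $\kappa'_{ij}=\kappa'_{ji}$ does follow directly from $\tfrac{1}{\kappa_ix_i}+\tfrac{1}{\kappa_ix_{-i}}=1=\tfrac{1}{\kappa_jx_j}+\tfrac{1}{\kappa_jx_{-j}}$, and the Matsumoto-type spanning argument plus Bruhat-triangular independence for (6)--(7) is sound (the diagonal entries $\varpi x_\al-\vartheta_\al$ are regular since they become invertible in $\CQ$, cf.\ Lemma \ref{lem:invertelem}).

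The one genuine gap is the support bound $\ell(w)\le m_{ij}-2$ in (3). Matching the $\de_{w_{ij}}$-coefficients of the two braid words only shows that their difference is supported on $w<w_{ij}$, i.e.\ $\ell(w)\le m_{ij}-1$; one must separately show that the coefficients at the two elements of length $m_{ij}-1$ cancel. You acknowledge this for $m_{ij}=3$ and fold it into the ``longer computation'' for $m_{ij}=4$, but for $G_2$ your purely abstract appeal to the module structure of $\DF$ cannot produce it. The missing ingredient (used in \cite{HMSZ} and \cite{CZZ}) is the pair of identities $X_I\de_{i_l}=-X_I$ and $\de_{i_1}X_I=-\tfrac{x_{i_1}}{x_{-i_1}}X_I$ for $I=(i_1,\dots,i_l)$, which relate the coefficient of $\de_w$ to that of $\de_{ws_{i_l}}$ (resp.\ $\de_{s_{i_1}w}$) and force the length-$(m_{ij}-1)$ terms to die after transferring back to the $T$'s. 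Note that the weaker bound $\ell(w)\le m_{ij}-1$ would still be enough for your proofs of (6) and (7), since the extra terms are themselves chosen basis elements, but it does not prove part (3) as stated.
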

The goal of this paper is to study the structure of $\HF$ and the mysterious coefficients $\tau_{ij}^{I_w}$. In Corollary \ref{cor:kappaij'}, we show that they actually belong to $\FFAK$, hence $R_F\lbr \La\rbr _F^\sim=\FFAK$ and $\HF=\HF^\sim$.  
\begin{remark}Similar as the case of $\DF$, we treat $\HF$ as a left module. The proof of Proposition \ref{prop:hecke}.(3) is similar to that of \cite[Proposition 6.8]{HMSZ}, but in general  the relationship between the coefficients  $\tau^{I_w}_{ij}$ and the coefficients in \textit{loc.it.} is unclear to us. 
\end{remark}
\begin{remark}We have $T_\al=(\mu-\tfrac{\vartheta_\al}{x_\al})\de_\al+\tfrac{\vartheta}{x_\al}$, and $\mu-\tfrac{\vartheta_\al}{x_\al}$ is not invertible in $(Q^F)'$. In other words, $\de_\al\not\in \HF$ and the set $\{T_{I_w}\}_{w\in W}$ is not a basis of $(Q_W^F)'$. This is different from the case if one considers the set $\{X_{I_w}\}_{w\in W}$ (cf.  \cite[Corollary 5.6]{CZZ}). Besides, $\de_\al\not\in \HF$ also implies that $T_\al\not\in \HF$ unless $\al$ is a simple root. Therefore, the definition of $\HF$ depends on the choice of simple roots. Note that $\DF$ does not depend on such choice \cite[Lemma 5.8]{CZZ}.
\end{remark}

For any sequence $I$, let $W(I)$ consist of $w\in W$ such that there exists a reduced sequence $I_w$ which is also a subsequence of $I$. If $I=I_u$ is a reduced sequence of $u$, then $W(I_u)=\{w\le u\}$.
\begin{corollary}\label{cor:commute2} We fix a set of reduced sequences $\{I_w\}_{w\in W}$.  

\begin{itemize}
\item[(a)] For any sequence $I$, we have $T_I=\sum_{w\in W(I)}q_wT_{I_w}$ with $q_w\in Q'$.  

\item[(b)] For any $x\in Q'$ we have
$$T_{I_w}x=\sum_{u\le w}\phi_{I_w,I_u}(x)T_{I_u}$$
such that $\phi_{I_w,I_u}(x)\in Q'$ and $\phi_{I_w,I_w}(x)=w(x)$. 

\end{itemize}
\end{corollary}
\begin{proof}
\noindent (a) This follows from Proposition \ref{prop:hecke} and the embedding $R_F\lbr\La\rbr_F^\sim\subseteq Q'. $

\noindent (b) From Proposition \ref{prop:hecke}.(1)-(3) we know that $T_{I_w}x=\sum_{E\subset I_w}\psi_{I_w,E}(x)T_E$ where $\psi_{I_w,E}(x)\in Q'$. From part (a) we know that $T_E=\sum_{u\in W(E)}q_{E,u}T_{I_u}$, so 
\[
T_{I_w}x=\sum_{E\subset I_w}\sum_{u\in W(E)}\psi_{I_w,E}(x)q_{E,u}T_{I_u}=\sum_{u\le w}\phi_{I_w,I_u}(x)T_{I_u}. 
\]
Moreover, it follows from Proposition \ref{prop:hecke}.(1)-(3) that $\phi_{I_w,I_w}(x)=w(x)$.
\end{proof}
\section{Structure theorem of formal affine Hecke algebra}
In the present section we introduce the action of the formal affine Hecke algebra $\HF$ on the formal group algebra, and  prove the main structure theorems of $\HF$. The main idea is to generalize the idea of \cite[Proposition 6.6]{CPZ} and \cite[Proposition 6.2]{CZZ} to the setting of $\HF$ by enlarging the formal group algebra.

Unless otherwise stated, in the section we assume that $2\ttt$ is regular in $R$, and either  $\ka^F=0$ or $a$ is regular in $R$, where $a=a_{11}$ is the coefficient of $xy$ in the power series $F(x,y)$. In the latter case, the leading term of $\Th$ is $2ax_{\ga}$, so $\Th$ is regular in $R\lbr \Ga\rbr _F$ (see \cite[Lemma 12.3]{CZZ}). We define 
$$\CR=\left\{\begin{array}{ll}R\lbr \Ga\rbr _F[\tfrac{1}{a\ttt \Th }] & \text{if }a \text{ is regular},\\
                R\lbr \Ga\rbr _F[\tfrac{1}{2\ttt x_\ga}] & \text{if }\kappa^F=0.\end{array}\right. $$          
$$\CQ:=\CR\lbr \La\rbr _F[\tfrac{1}{x_\al}|\al\in \Sigma], \quad \CQW=\CQ\sharp_{\CR}\CR[W],$$
$$\mathcal{T}_\al:=\vartheta_\al X_\al+\mu \de_\al \in \CQW,$$
where the product structure of $\CQW$ is similar as  that of $Q_W^F$. 

   \begin{definition} We define $\CHF$ to be the $\CR$-subalgebra of $\CQW$ generated by $\CR\lbr \La\rbr _F$ and $\LTA_i$ for all simple roots $\al_i$.
   \end{definition}

\begin{lemma} \label{lem:invertelem} If $a$ is regular, then  $\kappa_\al$ is invertible in $\CR\lbr \La\rbr _F$ for any root $\al$, and in particular $\vartheta_\al\in \CR\lbr\La\rbr_F$. For a general $F$,  $\mu-\tfrac{\vartheta_\al}{x_\al}\in \CQ$ is invertible and its inverse is in  $\CR\lbr \La\rbr _F$.
 \end{lemma}
 \begin{proof} Since $\kappa_\al$ has leading term $a$, it is invertible in $\CR\lbr \La\rbr _F$. 
Concerning $\mu-\tfrac{\vartheta_\al}{x_\al}$, we only need to consider $\vartheta_\al-\mu x_\al$. Since $\CR\lbr\La\rbr_F$ is a power series ring and subtracting a multiple of $x_\al$ from $\vartheta_\al$ does not change its constant term,  it suffices to show that $\vartheta_\al$ is invertible in $\CR\lbr\La\rbr_F$. If $\kappa^F=0$, then $\vartheta_\al=2x_\ga,$ and if $a$ is regular, then $\vartheta_\al=\tfrac{\Th}{\kappa_\al}$, so in both cases  $\vartheta_\al$ is invertible in $\CR\lbr\La\rbr_F$.
\end{proof}            

\begin{corollary} \label{cor:HFCHF} We have inclusions $\FFAK\subseteq \CR\lbr\La\rbr_F$ and $\HF\subseteq \CHF$.
\end{corollary}
\begin{proof}The first inclusion follows from Lemma \ref{lem:invertelem}, and consequently we have inclusions $(Q_W)'\subseteq \CQW$, which induces the second inclusion by mapping $T_i$ to $\mathcal{T}_i$. 
\end{proof}
 \begin{corollary} We have an isomorphism of $\CR\lbr \La\rbr _F$-modules:
   $\CHF\cong  \CR\lbr \La\rbr _F\otimes_{R\lbr \La\rbr _F}\DF$, and  $\{X_{I_{w}}\}_{w\in W}$ is a basis of $\CHF$.
   \end{corollary}
\begin{proof} Note that $\de_\al=1-x_\al X_\al$, so 
$$\mathcal{T}_\al=\vartheta_\al X_\al+\mu \de_\al=\mu+(\vartheta_\al-\mu x_\al)X_\al.$$
By Lemma \ref{lem:invertelem}, $\vartheta_\al-\mu x_\al$ is invertible in $\CR\lbr\La\rbr_F$, so $\CHF$ is the $\CR$-subalgebra of $\CQW$ generated by $X_\al$ for all  roots $\al$ and $\CR\lbr \La\rbr _F$, hence $\CHF\cong \CR\lbr \La\rbr _F\otimes_{R\lbr \La\rbr _F}\DF$. In particular, $\{X_{I_w}\}_{w\in W}$ is a basis of $\CHF$ as a left $\CR\lbr \La\rbr _F$-module.
\end{proof}

\begin{lemma} \label{lem:transition} For each $v\in W$, we have
$$T_{I_v}=\sum_{w\in W}a_{v,w}\de_w\in Q_W'$$
for some $a_{v,w}\in Q'$ such that 
\begin{itemize}
\item[(1)] $a_{v,w}=0$ unless $w\le v$ with respect to the Bruhat order on $W$,
\item[(2)]  $a_{v,v}=\prod_{\al\in v(\Sigma_-)\cap \Sigma_{+}}(\mu-\tfrac{\vartheta_\al}{x_\al})$.
\end{itemize}
Similar result holds for $\mathcal{T}_{I_v}$. Moreover, $\{\mathcal{T}_{I_{w}}\}_{w\in W}$ is a basis of $\CQW$ as a left $\CQ$-module. 
\end{lemma}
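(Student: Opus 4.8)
The plan is to prove the transition formula by induction on $\ell(v)$, reducing everything to the single-generator case. For $\ell(v)=0$ the statement is trivial. For the inductive step, write $v = s_i v'$ with $\ell(v')=\ell(v)-1$ and $T_{I_v}=T_i T_{I_{v'}}$, where $I_{v'}$ is the reduced sequence obtained from $I_v$ by deleting its first entry. By definition $T_i = \vartheta_i X_i + \varpi\de_i$, and $X_i = \frac{1}{x_i}(1-\de_i)$, so $T_i = \frac{\vartheta_i}{x_i} + (\varpi - \frac{\vartheta_i}{x_i})\de_i$ as an element of $Q_W'$ (using that $\de_i^2=1$ and collecting the $\de_i$-coefficient; here the coefficient of $\de_e$ is $\frac{\vartheta_i}{x_i}$ and the coefficient of $\de_i$ is $\varpi - \frac{\vartheta_i}{x_{-i}}$, which differ by a unit-irrelevant sign convention — I will be careful with which of $x_i$, $x_{-i}$ appears, but it does not affect the structure). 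Expanding $T_i \cdot \sum_{w} a_{v',w}\de_w$ and using the smash-product multiplication $q\de_u \cdot q'\de_{u'} = q\, u(q')\, \de_{uu'}$, each term $a_{v',w}\de_w$ contributes to $\de_w$ (via the $Q'$-part of $T_i$) and to $\de_{s_i w}$ (via the $\de_i$-part). Collecting, $a_{v,u}$ is a $Q'$-linear combination of $a_{v',u}$ and $a_{v',s_i u}$.

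For (1): by the inductive hypothesis $a_{v',w}=0$ unless $w\le v'$, so the nonzero contributions to $\de_u$ come from $u\le v'$ or $s_i u\le v'$; in either case the standard exchange/subword property of the Bruhat order gives $u\le s_i v' = v$ (using $\ell(v)=\ell(v')+1$). For (2): the top term $a_{v,v}$ can only come from the $a_{v',v'}$ term multiplied by the $\de_i$-coefficient of $T_i$, i.e. $a_{v,v} = (\varpi - \frac{\vartheta_{\al}}{\kappa_\al})$-type factor times $s_i(a_{v',v'})$ for the appropriate root $\al$; here one must track which root shows up, and the bookkeeping is exactly the standard one: as $v'$ is extended to $s_i v'$ the set $v(\Sigma_-)\cap\Sigma_+$ gains exactly one root (the one corresponding to the new inversion), and $s_i$ permutes the remaining factors of $\prod_{\al\in v'(\Sigma_-)\cap\Sigma_+}(\varpi - \frac{\vartheta_\al}{\kappa_\al})$ among themselves — note each factor is $s_i$-invariant up to reindexing because $\varpi$, $\vartheta_\al/\kappa_\al$ depend only on $x_\al$ and $s_i$ sends $x_\al$ to $x_{s_i\al}$, matching the reindexing. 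One subtlety is that I have been writing $\varpi - \frac{\vartheta_\al}{x_\al}$ above but the claim states $\varpi - \frac{\vartheta_\al}{\kappa_\al}$; reconciling these requires rewriting $\frac{\vartheta_\al}{x_\al}$ in the form dictated by the definitions of $\vartheta_\al$ and $\varpi$ in the two cases $\kappa^F=0$ and $\kappa^F\neq 0$ — when $\kappa^F\neq 0$ one has $\vartheta_\al=\Th/\kappa_\al$ so $\vartheta_\al/x_\al$ is not literally $\vartheta_\al/\kappa_\al$, and the correct invariant combination that appears as the leading coefficient is the one in the statement; I will verify this directly from $T_i=\vartheta_i X_i+\varpi\de_i$ by a short computation in each case.

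The identical argument applies verbatim to $\mathcal{T}_{I_v}\in\CQW$ since $\CQ\supseteq Q'$ and the generators have the same form. For the final sentence — that $\{\mathcal{T}_{I_w}\}_{w\in W}$ is a $\CQ$-basis of $\CQW$ — I will use the transition matrix $(a_{v,w})_{v,w\in W}$ just constructed: by (1) it is triangular with respect to any linear refinement of the Bruhat order, and by (2) its diagonal entries are $\prod_{\al\in v(\Sigma_-)\cap\Sigma_+}(\varpi-\frac{\vartheta_\al}{\kappa_\al})$, which by Lemma \ref{lem:invertelem} are products of units in $\CR\lbr\La\rbr_F \subseteq \CQ$, hence invertible in $\CQ$. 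Therefore the transition matrix is invertible over $\CQ$, and since $\{\de_w\}_{w\in W}$ is a $\CQ$-basis of $\CQW$, so is $\{\mathcal{T}_{I_w}\}_{w\in W}$.

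The main obstacle I anticipate is purely bookkeeping rather than conceptual: getting the diagonal term exactly right, i.e. correctly identifying which root is adjoined to $v(\Sigma_-)\cap\Sigma_+$ under $v'\mapsto s_i v'$, checking the $s_i$-action merely permutes the remaining factors, and — most delicately — matching the literal coefficient $\varpi - \frac{\vartheta_\al}{\kappa_\al}$ in the statement against whatever emerges from the expansion of $T_i$, separately in the cases $\kappa^F=0$ and $\kappa^F\neq 0$. Everything else (the Bruhat-order triangularity, the final basis argument via the invertible transition matrix) is standard and follows the template of \cite[Prop. 5.3, Prop. 5.7]{HMSZ} for the Demazure algebra.
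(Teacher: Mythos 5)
Your proposal follows essentially the same route as the paper: the paper likewise rewrites $T_i=\tfrac{\vartheta_i}{x_i}+(\varpi-\tfrac{\vartheta_i}{x_i})\de_i$, appeals to the direct inductive computation of \cite[Lem.~5.4]{CZZ} for the Bruhat triangularity and the diagonal entries, and deduces the basis statement from invertibility of the diagonal via Lemma \ref{lem:invertelem}. The discrepancy you flag between $\varpi-\tfrac{\vartheta_\al}{x_\al}$ (what the expansion yields, and what you correctly obtain) and $\varpi-\tfrac{\vartheta_\al}{\kappa_\al}$ (what the statement prints) is a typo in the paper rather than something to reconcile: Lemma \ref{lem:invertelem} and its invocation in the paper's own proof both concern $\varpi-\tfrac{\vartheta_\al}{x_\al}$, which is the correct diagonal entry.
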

\begin{proof} Note that  $T_i=\tfrac{\vartheta_i}{x_i}+(\mu-\tfrac{\vartheta_i}{x_i})\de_i$. Then by direct computation similar as the one in \cite[Lemma 5.4]{CZZ}, we obtain the conclusion. The property for $\mathcal{T}_{I_v}$ follows similarly.

For the last part, notice that  the transition matrix from the basis $\{\de_w\}_{w\in W}$ to the set $\{\mathcal{T}_{I_w}\}_{w\in W}$ is lower triangular with $\mu-\tfrac{\vartheta_\al}{x_\al}$ on the diagonal, so by Lemma \ref{lem:invertelem},  this matrix is invertible. Hence,  the conclusion follows.
\end{proof}

\medskip

For each root $\al$, we define an operator in $\End_{R_F}(R_F\lbr \La\rbr _F^\kappa)$:
$$\tau_\al^F(u)=\vartheta_\al\De_\al(u)+\mu s_\al(u)\in R_F\lbr \La\rbr _F^\kappa,\quad u\in R_F\lbr \La\rbr _F^\kappa. $$
Similarly we can define  $\tau_\al^F$ in $\End_{\CR}(\CR\lbr \La\rbr _F)$. For each sequence $I=(i_1,...,i_m)$ in $[n]$, define 
$$\tau^F_{I}=\tau^F_{\al_{i_1}}\circ\cdots \circ \tau^F_{\al_{i_m}}.$$ Again, if $I_w$ is a reduced sequence of $w\in W$, the operator $\tau^F_{I_w}$ will depend on $I_w$ unless $F$ is the additive or a multiplicative FGL.

\begin{lemma}The following formulas hold for any $u,v\in \CR\lbr \La\rbr _F$ and $w\in W$:
\begin{itemize}
\item[(1)] $\tau_\al(1)=\mu$, $\tau_\al(uv)=\tfrac{1}{\mu x_\al-\vartheta_\al}[x_\al\tau_\al(u)\tau_\al(v)-\vartheta_\al(\tau_\al(u)v+\tau_\al(v)u)+\vartheta_\al\mu u v].$
\item[(2)]  $\tau_\al^2(u)=\Th\tau_\al(u)+u$ and in particular,  $\tau_\al^2(u)=u$ if $\kappa^F=0$. 
\item[(3)] $\tau_\al (s_\al(u))=-\tau_\al(u)+\mu(u+s_\al(u)).$
\item[(4)] $w\tau_\al w^{-1}(u)=\tau_{w(\al)}(u)$.
\item[(5)] The operator $\tau_\al$ is $\CR\lbr \La\rbr _F^W$-linear.
\end{itemize}
\end{lemma}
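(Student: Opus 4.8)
The plan is to establish all five formulas by reducing them to the corresponding statements about the Demazure--Lusztig operator $\De_\al$ from Lemma \ref{lem:DLa}, together with the quadratic relation for $T_\al$ from Proposition \ref{prop:hecke}.(2). The starting observation is that $\tau_\al$ is nothing but the action of the element $T_\al = \vartheta_\al X_\al + \varpi\de_\al$ on $\CR\lbr\La\rbr_F$, where $X_\al$ acts as $\De_\al$ and $\de_\al$ acts as $s_\al$; this is because $X_\al = \frac{1}{x_\al}(1-\de_{s_\al})$ acts on $\CR\lbr\La\rbr_F$ exactly as $u\mapsto \frac{u-s_\al(u)}{x_\al}=\De_\al(u)$. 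Once one observes that the map $\CQW \to \End_\CR(\CQ)$, $q\de_w\mapsto (u\mapsto q\, w(u))$, is a ring homomorphism (standard for smash products), several of the formulas follow formally from the corresponding identities in $\CHF$ or $\DF$.

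Concretely: for (1), the value $\tau_\al(1)=\vartheta_\al\De_\al(1)+\varpi s_\al(1)=0+\varpi=\mu$ (in the $\kappa^F\ne 0$ case; in the $\kappa^F=0$ case $\varpi=1=\mu$). The Leibniz-type formula is obtained by first writing $s_\al(u)=u-x_\al\De_\al(u)$, so that $\tau_\al(u)=\varpi u+(\vartheta_\al-\varpi x_\al)\De_\al(u)$, i.e.\ $\De_\al(u) = \frac{\tau_\al(u)-\varpi u}{\vartheta_\al - \varpi x_\al} = \frac{\tau_\al(u)-\varpi u}{-(\varpi x_\al-\vartheta_\al)}$; substituting this expression for $\De_\al(u)$, $\De_\al(v)$, $\De_\al(uv)$ into the classical twisted Leibniz rule $\De_\al(uv)=\De_\al(u)v+\De_\al(v)u - x_\al\De_\al(u)\De_\al(v)$ of Lemma \ref{lem:DLa}.(3) and clearing denominators yields the stated identity after routine algebra. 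For (2), note $\tau_\al^2 = $ the action of $T_\al^2$; by Proposition \ref{prop:hecke}.(2), $T_\al^2 = 1$ when $\kappa^F=0$ and $T_\al^2 = \Th T_\al + 1$ when $\kappa^F\ne 0$, and applying these operator identities to $u$ gives the claim. (Alternatively one verifies (2) directly from $\tau_\al(u)=\varpi u + (\vartheta_\al-\varpi x_\al)\De_\al(u)$, using $\De_\al^2 = \kappa_\al\De_\al$, $\De_\al s_\al = -\De_\al$, and the definitions $\vartheta_\al=\Th/\kappa_\al$, $\varpi=\mu$, plus the identity $\Th=\mu-\mu_F(x_{-\al})$ relating $\Th,\mu,\kappa_\al$.) For (3), apply $\tau_\al$ to $s_\al(u)$: since $\De_\al s_\al(u) = -\De_\al(u)$ and $s_\al s_\al(u)=u$, we get $\tau_\al(s_\al(u)) = -\vartheta_\al\De_\al(u)+\varpi u$; meanwhile $-\tau_\al(u)+\varpi(u+s_\al(u)) = -\vartheta_\al\De_\al(u)-\varpi s_\al(u)+\varpi u+\varpi s_\al(u) = -\vartheta_\al\De_\al(u)+\varpi u$, matching. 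For (4), conjugation: $w\tau_\al w^{-1}$ is the action of $\de_w T_\al \de_{w^{-1}} = T_{w(\al)}$ (noted in the text), or directly from $w\De_\al w^{-1} = \De_{w(\al)}$ (Lemma \ref{lem:DLa}.(4)) together with $w s_\al w^{-1} = s_{w(\al)}$ and the $W$-invariance of the scalars $\vartheta_\al$, $\varpi$ under... well, one uses $w(\vartheta_\al) = \vartheta_{w(\al)}$, which holds since $\vartheta_\al$ depends on $x_\al$ only through $\kappa_\al = \kappa^F(x_\al)$ and $\Th\in R_F$ is a scalar. For (5), if $u\in\CR\lbr\La\rbr_F^W$ then $s_\al(u)=u$ and $\De_\al(u)=0$, so $\tau_\al(cu) = \vartheta_\al\De_\al(c)u + \varpi s_\al(c)u = (\tau_\al(c))u$ using $\De_\al(cu)=\De_\al(c)u+s_\al(c)\De_\al(u)=\De_\al(c)u$ and $s_\al(cu)=s_\al(c)u$.

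The only genuinely non-formal part is formula (1): one must carry out the substitution into the Leibniz rule and check that the factor $\frac{1}{\varpi x_\al - \vartheta_\al}$ is exactly what emerges, which requires keeping careful track of signs and of the term $\vartheta_\al\varpi$ coming from the cross terms. I expect this to be the main obstacle, though it is a finite computation; everything else is a direct consequence of Lemma \ref{lem:DLa} and Proposition \ref{prop:hecke} once one identifies $\tau_\al$ with the action of $T_\al$. One should also remark at the outset that each formula needs to be checked to land in $\CR\lbr\La\rbr_F$ (not just $\CQ$), but this is immediate since $\tau_\al$ was defined as an operator preserving $\CR\lbr\La\rbr_F$ and the right-hand sides are built from $\tau_\al$-values, $\varpi$, and (in (1)) the single division by $\varpi x_\al - \vartheta_\al = -(\vartheta_\al - \varpi x_\al)$, which is invertible in $\CR\lbr\La\rbr_F$ by Lemma \ref{lem:invertelem}.
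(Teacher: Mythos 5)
Your proposal is correct and follows essentially the same route as the paper, which simply invokes direct computation for (1)--(3) and the corresponding properties of $\De_\al$ for (4)--(5); your reduction of (2) to the quadratic relation for $T_\al$ is that same computation in disguise. One small point: carrying out your substitution in (1) actually produces $\vartheta_\al\varpi uv$ as the final term rather than $\vartheta_\al\varpi$ (as one checks by taking $u=1$ and $v$ arbitrary), so the printed statement has a typo there rather than your derivation being off.
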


\begin{proof} We obtain (1) and (3) by direct computation. For (2), it follows from computation similar as in \cite[Lemma 8.8]{HMSZ}. The parts (4) and (5) follow from corresponding properties of $\De_\al$ in Lemma \ref{lem:DLa}.
\end{proof}
Let $\IF$ be the kernel of the augmentation map $\ep: \CR\lbr \La\rbr _F\to \CR$.  We have 
$$\ep(\vartheta_\al)=
\left\{\begin{array}{ll}\tfrac{\Th}{a} & \text{if }a \text{ is regular},\\
2x_\ga & \text{if }\kappa^F=0.
\end{array}\right.$$ 
Denote $\vartheta=\ep(\vartheta_\al)$. By $\De_\al(\IF^i)\subseteq \IF^{i-1}$ and $s_\al(\IF^i)=\IF^i$, so $\tau_\al(\IF^i)\subseteq \IF^{i-1}$. The operator $\tau_\al$ induces an operator on the associated graded ring $$\GR\tau_\al: \GR^*_{\CR}(\La, F)\to \GR^*_{\CR}(\La, F)$$ of degree $-1$. Let  $\GR\De_\al:\GR^*_{\CR}(\La,F)\to \GR^*_{\CR}(\La,F)$ be the graded version of $\De_\al$ (for example, see \S2).

\begin{lemma} \label{lem:GRtau}For any sequence $I$, we have:

(1) $\GR \tau_I=\vartheta^{l(I)}\GR\De_I $.

(2) $\tau_I(\IF^i)\subseteq \IF^{i-l(I)}$. If $I$ is not reduced, then $\tau_I(\IF^i)\subseteq \IF^{i-l(I)+1}$.

\end{lemma}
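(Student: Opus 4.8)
The plan is to prove both statements together, since (1) is the key tool for the degree-shift statement and (2) follows from it. First I would treat the single-operator case $I=(i)$: by definition $\tau_i(u)=\vartheta_i\De_i(u)+\varpi s_i(u)$, and I need to understand its effect on the filtration $\IF^\bullet$. We already know $\De_i(\IF^j)\subseteq\IF^{j-1}$ and $s_i(\IF^j)=\IF^j$, so $\tau_i(\IF^j)\subseteq\IF^{j-1}$ (using that $\vartheta_i$ has invertible leading coefficient when $\kappa^F\neq 0$, namely $\vartheta_i=\Th/\kappa_i$ with $\ep(\vartheta_i)=\vartheta=\Th/a$, and $\vartheta_i=2x_\ga$ when $\kappa^F=0$, so multiplication by $\vartheta_i$ does not increase filtration in either case). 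Passing to the associated graded, the $\varpi s_i$ term contributes nothing in degree $-1$: indeed $s_i$ preserves the filtration exactly (degree $0$), so on $\GR^j\to\GR^{j-1}$ only the $\vartheta_i\De_i$ part survives, and since $\vartheta_i\equiv\vartheta\pmod{\IF}$ (as $\ep(\vartheta_i)=\vartheta$ and the higher-order terms of $\vartheta_i$ push filtration up), we get $\GR\tau_i=\vartheta\,\GR\De_i$ on $\GR^*_{\CR}(\La,F)$.

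Next I would bootstrap to arbitrary sequences $I=(i_1,\dots,i_m)$ by composition. The associated graded functor is multiplicative: $\GR(\tau_{i_1}\circ\cdots\circ\tau_{i_m})=\GR\tau_{i_1}\circ\cdots\circ\GR\tau_{i_m}$, because each $\tau_{i_k}$ shifts the filtration by $-1$ and composition of filtered maps passes to the associated graded as composition. Here I would be slightly careful: $\GR\tau_i$ is a degree $-1$ map, and $\vartheta$ lies in $\GR^0_{\CR}(\La,F)=\CR$, so it is central and commutes past all the graded Demazure operators; hence $\GR\tau_I=\vartheta^m\,\GR\De_{i_1}\circ\cdots\circ\GR\De_{i_m}=\vartheta^{l(I)}\GR\De_I$, which is (1).

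For (2), I would argue by induction on $m=l(I)$, or more cleanly directly from (1) together with the known properties of $\De_I$ in Lemma \ref{lem:DLa}(6). Since $\GR\tau_I=\vartheta^{l(I)}\GR\De_I$ has degree $-l(I)$, and $\GR\De_I$ carries $\GR^i$ into $\GR^{i-l(I)}$ (being the graded version of $\De_I$, which satisfies $\De_I(\IF^i)\subseteq\IF^{i-l(I)}$), a standard filtered-module argument — lift, apply, and check the leading term lands where the graded computation says — gives $\tau_I(\IF^i)\subseteq\IF^{i-l(I)}$. For the improvement when $I$ is not reduced: by Lemma \ref{lem:DLa}(6), $\De_I(\IF^i)\subseteq\IF^{i-l(I)+1}$, so $\GR\De_I$ vanishes in degree exactly $-l(I)$, hence $\GR\tau_I=\vartheta^{l(I)}\GR\De_I$ also vanishes in that degree, which forces $\tau_I(\IF^i)\subseteq\IF^{i-l(I)+1}$.

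The main obstacle I anticipate is the bookkeeping in the first step: verifying that multiplication by $\vartheta_\al$ genuinely does not shift the filtration downward (so that $\tau_\al$ really lands in $\IF^{i-1}$ and not worse), and that its image in $\GR^0$ is the constant $\vartheta$ rather than something with a nontrivial degree-$0$ correction. This hinges on $\vartheta_\al$ being an honest power series in $\CR\lbr\La\rbr_F$ with unit leading coefficient — which is exactly why the hypothesis "$a$ regular in $R$ if $\kappa^F\neq 0$" and the localization defining $\CR$ (inverting $a$, $\Th$, $\ttt$) were set up. Once that is nailed down, the rest is formal manipulation of filtered modules and the already-established identity $\GR\De_\al = \De_\al^{F_a}$ under $\GR^*_R(\La,F)\cong S_R^*(\La)$.
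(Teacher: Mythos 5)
Your proof is correct and follows essentially the same route as the paper's: compute $\GR\tau_i=\vartheta\,\GR\De_i$ for a single reflection (the $\varpi s_i$ term dies in the associated graded and $\vartheta_i\equiv\vartheta\pmod{\IF}$), compose to get (1), and deduce (2) from Lemma \ref{lem:DLa}(6). You in fact spell out the base-case bookkeeping more explicitly than the paper does; no gaps.
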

\begin{proof} (1) If $I=(i_1)$, we have
$$\GR\tau_{i_1}=\ep(\vartheta_{i_1})\GR\De_{i_1}=\vartheta\GR\De_{i_1}.$$
Now let $I=(i_1,...,i_m)$ and let $I'=(i_2,...,i_m)$. Recursively, we have 
$$\GR\tau_I(u)=\GR\tau_{{i_1}}\circ \GR\tau_{I'}(u)=\vartheta\GR\De_{i_1}(\vartheta^{m-1}\GR\De_{I'}(u))=\vartheta^m\GR\De_I(u).$$ 

(2) By Lemma \ref{lem:DLa}.(6), $\GR\De_I$ has degree $-l(I)$, so by (1), $\tau_I(\IF^i)\subseteq \IF^{i-l(I)}$. If $I$ is not reduced, then by Lemma \ref{lem:DLa}.(6), $\GR\De_I=0$, hence $\GR\tau_I=0$, so $\tau_I(\IF^i)\subseteq \IF^{i-l(I)+1}$.
\end{proof}

Recall that by Lemma \ref{lem:u0}, there exists $u_0\in \IF^N$ such that $\De_{I_0}(u_0)\equiv\ttt\mod\IF$. The following lemma uses the proof of \cite[Proposition 6.6]{CPZ}. 
  
\begin{lemma}\label{lem:u0tau}
The element $u_0$ satisfies the following property: If $l(I)\le N$, then 
$$\ep \tau_I(u_0)=\left\{\begin{array}{ll} \vartheta^{N}\ttt & \text{if } I \text{ is reduced and }l(I)=N,\\
                               0 & \text{otherwise.}\end{array}\right.$$
Consequently, the matrix $(\tau_{I_v}\tau_{I_w}(u_0))_{(v,w)\in W\times W}$  is invertible in $\CR\lbr \La\rbr _F$.                               
\end{lemma}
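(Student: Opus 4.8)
The plan is to combine the filtration estimates in Lemma \ref{lem:GRtau} with the nondegeneracy of the classical pairing coming from $u_0$ in Lemma \ref{lem:u0}. For the first assertion, write $w = w(I)$ and compare with the Demazure side: by Lemma \ref{lem:GRtau}.(1), on the associated graded ring $\GR \tau_I = \vartheta^{l(I)} \GR \De_I$. If $I$ is not reduced, then $\tau_I(\IF^i) \subseteq \IF^{i-l(I)+1}$ by Lemma \ref{lem:GRtau}.(2), and since $u_0 \in \IF^N$ with $l(I) \le N$ this already forces $\tau_I(u_0) \in \IF^{N - l(I) + 1} \subseteq \IF^1$, so $\ep \tau_I(u_0) = 0$. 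If $I$ is reduced with $l(I) < N$, then $\tau_I(u_0) \in \IF^{N - l(I)} \subseteq \IF^1$, again giving $\ep\tau_I(u_0) = 0$. Finally if $I$ is reduced and $l(I) = N$ (so $I$ is a reduced sequence for $w_0$), then $\tau_I(u_0) \in \IF^0 = \CR\lbr \La\rbr _F$, and modulo $\IF$ we may compute in the graded ring: $\ep \tau_I(u_0) = \GR\tau_I(u_0) = \vartheta^N \GR\De_I(u_0) = \vartheta^N \ep\De_I(u_0) = \vartheta^N \ttt$ by Lemma \ref{lem:u0}.(1). (Here one uses that different reduced sequences $I_0$ of $w_0$ all give $\ep\De_{I_0}(u_0) = \ttt$, which is part of Lemma \ref{lem:u0}.(1).)

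For the invertibility of the matrix $M := (\tau_{I_v}\tau_{I_w}(u_0))_{(v,w)\in W\times W}$, the strategy is to reduce mod $\IF$ and compare with the analogous Demazure matrix, which is invertible by Lemma \ref{lem:u0}.(2) since $2\ttt$ (hence $\ttt$) is regular—indeed invertible—in $\CR$. First observe $\ttt$ and $\vartheta$ are invertible in $\CR$: in the case $\kappa^F\neq 0$ we have $\vartheta = \Th/a$ which is invertible since both $a$ and $\Th$ were inverted in $\CR$, while in the case $\kappa^F = 0$, $\vartheta = 2x_\ga$ is invertible in $\CR = R\lbr \Ga\rbr _F[\tfrac{1}{2\ttt x_\ga}]$; and $\ttt$ is inverted in $\CR$ in both cases. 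It therefore suffices to show $M$ is invertible modulo $\IF$, i.e. that $\ep(M) = (\ep\tau_{I_v}\tau_{I_w}(u_0))_{(v,w)}$ is an invertible matrix over $\CR$. Now $\tau_{I_v}\tau_{I_w} = \tau_{I_v I_w}$ where $I_v I_w$ is the concatenation, of length $\ell(v) + \ell(w)$; applying the first part of the lemma with $I = I_v I_w$, the entry $\ep\tau_{I_v}\tau_{I_w}(u_0)$ vanishes unless $\ell(v) + \ell(w) = N$ and $I_vI_w$ is reduced, i.e. unless $v(I)w(I)$... more precisely unless $s_{i_1}\cdots s_{i_{\ell(v)}} s_{j_1}\cdots s_{j_{\ell(w)}}$ is a reduced word of length $N$, which (since it then represents the unique length-$N$ element $w_0$) happens exactly when $w = v^{-1}w_0$; in that case the entry equals $\vartheta^N \ttt$, a unit in $\CR$. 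Hence $\ep(M)$ is, after reordering the columns by $w \mapsto v^{-1}w_0$, a diagonal matrix with the unit $\vartheta^N\ttt$ on the diagonal, so it is invertible over $\CR$, and therefore $M$ is invertible over $\CR\lbr\La\rbr_F$.

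The main obstacle I anticipate is the bookkeeping in the second part: one must justify carefully that $\tau_{I_v}\tau_{I_w}(u_0) = \tau_{I_vI_w}(u_0)$ (immediate from the definition of $\tau_I$ as a composite), and then that the concatenated sequence $I_vI_w$ is reduced if and only if $\ell(v)+\ell(w) = N$, for which one uses that $I_v$, $I_w$ are individually reduced and that the only element of $W$ of length $N$ is $w_0$ (so a reduced word of length $N$ must spell $w_0$), together with the standard fact that a product of a reduced word for $v$ and a reduced word for $w$ is reduced iff $\ell(vw) = \ell(v)+\ell(w)$. A secondary point requiring care is that the value $\vartheta^N\ttt$ obtained in the first part is independent of which reduced sequence for $w_0$ arises as $I_vI_w$; this is exactly the content of Lemma \ref{lem:u0}.(1), which asserts $\De_I(u_0) = \ttt + \IF$ for \emph{every} reduced $I$ of length $N$. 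Everything else is a routine passage to the associated graded ring using the degree estimates already established.
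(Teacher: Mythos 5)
Your first assertion is proved exactly as in the paper (pass to the associated graded ring via Lemma \ref{lem:GRtau} and invoke Lemma \ref{lem:u0}.(1)), and your reduction of the invertibility of $M$ to the invertibility of $\ep(M)$ over $\CR$ is also the paper's route (the paper justifies it by noting $\IF$ lies in the Jacobson radical of $\CR\lbr\La\rbr_F$). The problem is in your description of $\ep(M)$. You apply the first assertion to $I=I_vI_w$, but that assertion carries the hypothesis $l(I)\le N$, whereas $l(I_vI_w)=\ell(v)+\ell(w)$ can be as large as $2N$. For pairs with $\ell(v)+\ell(w)>N$ the concatenation is of course non-reduced, but Lemma \ref{lem:GRtau}.(2) then only gives $\tau_{I_vI_w}(u_0)\in\IF^{N-l(I)+1}$ with $N-l(I)+1\le 0$, which says nothing. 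And indeed such entries are generally nonzero: already in rank one, $\ep\tau_\al\tau_\al(u_0)=\ep(\Th\tau_\al(u_0)+u_0)=\Th\vartheta\ttt\neq 0$ when $\kappa^F\neq 0$. So $\ep(M)$ is \emph{not} a diagonal matrix after permuting columns by $w\mapsto v^{-1}w_0$, and your final step as written fails.

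The conclusion is still correct, and the repair only uses the vanishing you have legitimately established, namely for $\ell(v)+\ell(w)\le N$. Order the rows by increasing $\ell(v)$ and the columns by decreasing $\ell(w)$, matching $v$ with $v^{-1}w_0$ within each length class. Every entry strictly above the diagonal then has $\ell(v)+\ell(w)\le N$ and either $\ell(v)+\ell(w)<N$ or $vw\neq w_0$, hence vanishes by the first assertion; the diagonal entries are the $\vartheta^N\ttt$'s; and the entries below the diagonal, though possibly nonzero, are irrelevant. Thus $\ep(M)$ is lower triangular with the unit $\vartheta^N\ttt$ on the diagonal (this is exactly the paper's argument), and invertibility follows.
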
      
\begin{proof} If $l(I)<N$, or if $l(I)=N$ and  $I$ not reduced, we have $\tau_I(u_0)\in\IF$, so $\ep\tau_I(u_0)=0$. If $l(I)=N$ and $I$ reduced, then $\GR\tau_I(u_0)=\vartheta^N\GR\De_{I}(u_0)=\vartheta^N\ttt$, so $\tau_I(u_0)\equiv\vartheta^N\ttt\mod\IF.$

Let us prove the second part.  Since $\IF=\ker \ep$ is contained in the Jacobson radical of $\CR\lbr \La\rbr _F$,  it suffices to show that the determinant of $(\ep\tau_{I_v}\tau_{I_w}(u_0))_{(v,w)\in W\times W}$ is invertible in $\CR$. If we order the $v$'s by increasing length and the $w$'s by decreasing length, then by (1), the matrix $(\ep\tau_{I_v}\tau_{I_w}(u_0))_{(v,w)\in W\times W}$ is lower triangular with $\vartheta^N\ttt$ on the diagonal, so its determinant is  a power of $\vartheta^N \ttt$, so the matrix is invertible in $\CR$.
\end{proof}

There is an action of $Q_W'$ on $Q'$ by 
$$(q\de_w)\cdot q'=qw(q'), \quad q,q'\in Q', ~w\in W, $$
and clearly $T_\al\cdot q=\tau_\al(q)$.  Similarly, $\CQW$ acts on $\CQ$ and $\mathcal{T}_\al\cdot q=\tau_\al(q)$.  They induce  a map of $R_F$-algebras
$$\HF\to \End_{R_F}(R_F\lbr \La\rbr _F^\kappa), \quad T_\al\mapsto \tau_\al,$$
and   a map of $\CR$-algebras $\CHF\to \End_{\CR}(\CR\lbr\La\rbr_F), \mathcal{T}_\al\mapsto \tau_\al$. If $f\in \HF\subseteq \CHF$, then denote its image in $\CHF$  by $f'$, we then have a commutative diagram
\begin{equation}\label{eq:HFCHF}\xymatrix{\FFAK\ar[r]^f\ar@{}[d]|-*[@]{\subseteq}& \FFAK\ar@{}[d]|-*[@]{\subseteq}\\
\CR\lbr\La\rbr_F \ar[r]^{f'} & \CR\lbr\La\rbr_F.}
\end{equation}

\begin{definition} We define the left $\FFAK$-submodule of $Q_W'$ by 
$$\WHF=\{f=\sum_{w\in W}q_wT_{I_w}|q_w\in Q' \text{ and } f\cdot R_F\lbr \La\rbr _F^\kappa\subseteq R_F\lbr \La\rbr _F^\kappa\}.$$ 
By Corollary \ref{cor:commute2}, $\WHF$ is an $R_F$-algebra, and  contains $T_i$ and $\FFAK$, so $\WHF\supseteq \HF$. 
\end{definition}

We are ready to prove the main result of this paper:

\begin{theorem} \label{thm:mainhecke} Suppose that $2\ttt$ is regular in $R$, and either $2\in R^\times$ or the root datum does not have any irreducible component $C_n^{sc}, n\ge 1$. Suppose that either $\kappa^F=0$ or  $a=a_{11}$ is regular in $R$. Then   $\WHF=\HF$ with basis $\{T_{I_w}\}_{w\in W}$   as a left $\FFAK$-module.
\end{theorem}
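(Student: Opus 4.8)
The plan is to prove the two assertions $\WHF=\HF$ and the freeness of $\WHF$ over $\FFAK$ on the basis $\{T_{I_w}\}$ in a single intertwined argument, modelled on the dual-pairing technique used in \cite{CZZ} for the formal affine Demazure algebra. First I would observe that $\HF\subseteq\WHF$ is immediate from the definition: $\HF$ is generated by $\FFAK$ and the $T_i$, and each $T_\al$ acts on $\FFAK$ as $\tau_\al$, which preserves $\FFAK$; moreover every element of $\HF$ can be written in the form $\sum_w q_w T_{I_w}$ with $q_w\in\FFAK\subseteq Q'$ because $\{T_{I_w}\}$ spans $\HF$ over $\FFAK$ (this is Proposition \ref{prop:hecke}.(7) combined with the fact, which is exactly the content to be upgraded, that one may take the coefficient ring to be $\FFAK$ rather than $R_F\lbr\La\rbr_F^\sim$). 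So the real content is the reverse inclusion $\WHF\subseteq\HF$, together with $\FFAK$-freeness of $\WHF$ on $\{T_{I_w}\}$.

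For the reverse inclusion, the key tool is the pairing: an element $f=\sum_w q_w T_{I_w}\in Q_W'$ lies in $\WHF$ iff $f\cdot\FFAK\subseteq\FFAK$, and I would test $f$ against the elements $\tau_{I_w}(u_0)$. Concretely, for $f\in\WHF$ and any $v\in W$ we have $f\cdot\tau_{I_v}(u_0)\in\FFAK$; expanding via Corollary \ref{cor:commute}/the action formula gives $f\cdot\tau_{I_v}(u_0)=\sum_w q_w\,\tau_{I_w}\tau_{I_v}(u_0)$. By Lemma \ref{lem:u0tau} the matrix $M=(\tau_{I_w}\tau_{I_v}(u_0))_{w,v}$ is invertible over $\CR\lbr\La\rbr_F$ (after adjoining $\tfrac1{a\ttt\Th}$ or $\tfrac1{2\ttt x_\ga}$), hence invertible over $\CQ$. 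Solving the linear system expresses each $q_w$ as a $\CQ$-combination of the quantities $f\cdot\tau_{I_v}(u_0)\in\FFAK$. Now $q_w\in Q'=\FFAK[\tfrac1{x_\al}]$, and from the triangularity in Lemma \ref{lem:u0tau} (order $v$'s by increasing length, $w$'s by decreasing length; diagonal entries $\vartheta^N\ttt$) one controls the denominators: the only denominators introduced beyond those already in $\FFAK$ are powers of $\vartheta^N\ttt$, i.e. powers of $a$, $\ttt$, $\Th$ (or $2$, $\ttt$, $x_\ga$). Then Lemma \ref{lem:invertintersect}, applied with $c$ running over $a,\ttt$ (resp. $2,\ttt$) and $\beta$ over the roots and over $\ga$, forces $q_w\in\FFAK$. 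Thus $f=\sum_w q_w T_{I_w}$ with $q_w\in\FFAK$, so $f\in\HF$; this proves $\WHF\subseteq\HF$ and simultaneously that $\WHF=\HF$ is spanned over $\FFAK$ by $\{T_{I_w}\}$.

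For freeness (linear independence of $\{T_{I_w}\}$ over $\FFAK$): if $\sum_w q_w T_{I_w}=0$ in $Q_W'$ with $q_w\in\FFAK$, then applying the element to $\tau_{I_v}(u_0)$ gives $\sum_w q_w\tau_{I_w}\tau_{I_v}(u_0)=0$ for all $v$; invertibility of $M$ over $\CQ$ forces all $q_w=0$. (Alternatively, pass to $\CHF\cong\CR\lbr\La\rbr_F\otimes_{R\lbr\La\rbr_F^\kappa}\HF$ where $\{X_{I_w}\}$ is already known to be a basis by Proposition \ref{prop:demazure}.(7), and use the triangular change of basis between $\{T_{I_w}\}$ and $\{X_{I_w}\}$ from Lemma \ref{lem:transition} whose diagonal entries $\prod(\varpi-\tfrac{\vartheta_\al}{\kappa_\al})$ are invertible by Lemma \ref{lem:invertelem}.)

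I expect the main obstacle to be the bookkeeping in the denominator-clearing step: one must verify that inverting $M$ over $\CQ$ really introduces no denominators other than controlled powers of $a,\ttt,\Th$ (resp. $2,\ttt,x_\ga$) and the $x_\al$ already present in $Q'$, so that the two-step intersection argument via Lemma \ref{lem:invertintersect} closes — first removing the $\tfrac1{a}$-type and $\tfrac1{\Th}$-type denominators living over $R_F$, then removing the $\tfrac1{x_\al}$-type denominators over $R_F\lbr\La\rbr_F$. This requires care because $\Th\in R_F$ involves the extra rank-one lattice $\Ga$, so one applies Lemma \ref{lem:invertintersect} in stages, first with $V$ containing $\La$ and $\Ga$ and $c=\ttt$, $\beta=\al$, then descending. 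The hypothesis that $2\ttt$ is regular and the $\Sigma$-regularity assumption (guaranteed by $2\in R^\times$ or absence of a $C_n^{sc}$ component) are exactly what make all the needed elements regular so that these intersection lemmas apply.
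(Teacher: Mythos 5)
Your proposal is correct and follows essentially the same route as the paper: the inclusion $\HF\subseteq\WHF$ from the generators, then testing $f=\sum_w q_wT_{I_w}$ against the elements $\tau_{I_v}(u_0)$ and using the invertibility of the matrix $(\tau_{I_w}\tau_{I_v}(u_0))$ from Lemma \ref{lem:u0tau} to place $q_w$ in $\CR\lbr\La\rbr_F$, followed by the two-step intersection via Lemma \ref{lem:invertintersect} to conclude $q_w\in\FFAK$, with linear independence from the triangular transition matrix of Lemma \ref{lem:transition}. The denominator bookkeeping you flag as the main obstacle is handled in the paper exactly as you anticipate, via the cartesian squares of localizations.
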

\begin{proof} Let $f=\sum_{w\in W}q_w{T}_{I_w}\in \WHF$ with $q_w\in Q'$. If we could show that $q_w\in \FFAK$ for all $w\in W$, then $\{T_{I_w}\}_{w\in W}$ is a basis of $\WHF$ as left $\FFAK$-module, so $\WHF\subseteq \HF$. 

Denote the image of $f$ along the inclusion $(Q_W)'\subseteq \CQW$ by $f'$. Applying $f'$ to $\tau_{I_v}(u_0)\in \CR\lbr \La\rbr _F$ for all $v\in W$, we get a system of linear equations $$\sum_{w\in W}q_w\tau_{I_w}\tau_{I_v}(u_0)=f'\cdot \tau_{I_v}(u_0)\in \CR\lbr \La\rbr _F.$$ By Lemma \ref{lem:u0tau}, we know that the matrix $(\tau_{I_w}\tau_{I_v}(u_0))_{(w,v)\in W\times W}$ is invertible, so $q_w\in \CR\lbr \La\rbr _F$. 

If $\kappa^F=0$, then we have the following diagram of embeddings:
$$\xymatrix{R_F\lbr \La\rbr _F\ar@{}[r]|-*[@]{\subseteq} \ar@{}[d]|-*[@]{\subseteq}& R_F[\tfrac{1}{2\ttt x_\ga}]\lbr \La\rbr _F \ar@{}[d]|-*[@]{\subseteq} \\
R_F\lbr \La\rbr _F[\tfrac{1}{x_\Sigma}]\ar@{}[r]|-*[@]{\subseteq} &  R_F[\tfrac{1}{2\ttt x_\ga}]\lbr \La\rbr _F[\tfrac{1}{x_\Sigma}].}$$
Here ${x_\Sigma}:=\prod_{\al\in \Sigma}{x_\al}$.  By \cite[Corollary 3.4]{CZZ}, we see that this  square is cartesian, so 
$$q_w\in R_F[\tfrac{1}{2\ttt x_\ga}]\lbr \La\rbr _F\cap R_F\lbr \La\rbr _F[\tfrac{1}{x_\Sigma}]=R_F\lbr \La\rbr _F.$$

If $\kappa^F\neq 0$, replacing $2\ttt x_\ga$ by $a\ttt \Th$ in the above diagram, and inverting $\{\kappa_\al|\al\in \Sigma\}$ in all four rings, then by \cite[Corollary 3.4]{CZZ}, inside $R_F[\tfrac{1}{a\ttt\Th}]\lbr \La\rbr _F^\kappa[\tfrac{1}{x_{\Sigma}}]$, we have $$q_w\in R_F[\tfrac{1}{a\ttt \Th}]\lbr \La\rbr _F\cap R_F\lbr \La\rbr ^\kappa_F[\tfrac{1}{x_\al }|\al\in \Sigma]=R_F\lbr \La\rbr _F^\kappa.$$
Here we are using the fact that $R_F[\tfrac{1}{a\ttt\Th}]\lbr\La\rbr_F=R_F[\tfrac{1}{a\ttt\Th}]\lbr\La\rbr_F^\kappa$ by Lemma \ref{lem:invertelem}. 
\end{proof}

The following theorem was proved in \cite[Proposition 12.2]{Gin} if $F$ is a multiplicative FGL or the additive FGL (see also \cite[Theorem 7.2.16]{CG}), and was partially proved in \cite[Proposition 9.1]{HMSZ} if $a$ is invertible in $R$. 

 \begin{theorem} \label{thm:action}Under the hypothesis in Theorem \ref{thm:mainhecke}, then $\HF$ is isomorphic to the subalgebra of $\End_{R_F}(\FFAK)$ generated by $T_i$ for all $i\in [n]$ and by multiplications of elements in $\FFAK$. 
\end{theorem}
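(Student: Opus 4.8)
The plan is to produce a surjective algebra homomorphism $\HF\to A$, where $A$ denotes the subalgebra of $\End_{R_F}(\FFAK)$ generated by the $\tau_i$ and by multiplications $m_u$ with $u\in\FFAK$, and then to show it is injective. Surjectivity is essentially built in: the action of $Q_W'$ on $Q'$ restricts to an action of $\HF$ on $\FFAK$ (this is well defined because $\WHF=\HF$ by Theorem \ref{thm:mainhecke}, so every element of $\HF$ preserves $\FFAK$), and under this action $T_\al\mapsto\tau_\al$ while $u\in\FFAK\subseteq\HF$ acts by $m_u$. Hence the image is exactly $A$, and we have a surjection $\rho\colon\HF\twoheadrightarrow A$.

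For injectivity, the key is the element $u_0\in\IF^N$ from Lemma \ref{lem:u0} together with the invertibility statement in Lemma \ref{lem:u0tau}. Take $f=\sum_{w\in W}q_w T_{I_w}\in\ker\rho$ with $q_w\in\FFAK$ (using the basis from Theorem \ref{thm:mainhecke}). Then for every $v\in W$ we have $0=\rho(f)\big(\tau_{I_v}(u_0)\big)=f\cdot\tau_{I_v}(u_0)=\sum_{w\in W}q_w\,\tau_{I_w}\tau_{I_v}(u_0)$, viewing everything inside $\CR\lbr\La\rbr_F$ (note $\tau_{I_v}(u_0)$ lies in $\CR\lbr\La\rbr_F$ and $f$ acts on it via the same $\tau$-operators because the $\tau_\al$ in $\End_{\CR}(\CR\lbr\La\rbr_F)$ restrict the ones on $\FFAK$). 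By Lemma \ref{lem:u0tau} the matrix $\big(\tau_{I_w}\tau_{I_v}(u_0)\big)_{(w,v)}$ is invertible over $\CR\lbr\La\rbr_F$, so all $q_w=0$ and $f=0$. Therefore $\rho$ is an isomorphism.

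I expect the main obstacle to be purely bookkeeping: making sure the action of $\HF$ on $\FFAK$ is genuinely well defined and compatible with the ambient action of $\CQW$ on $\CQ$, i.e. that applying $f\in\HF$ to an element of $\CR\lbr\La\rbr_F$ and then restricting coincides with using the operators $\tau_{I_w}\in\End_{\CR}(\CR\lbr\La\rbr_F)$. This is where one uses that $\tau_\al^F$ is defined uniformly in $\End_{R_F}(\FFAK)$ and in $\End_{\CR}(\CR\lbr\La\rbr_F)$, and that $\CR\lbr\La\rbr_F\supseteq\FFAK$ with the $\tau$-operators on the larger ring extending those on the smaller. Once that compatibility is pinned down, the determinant argument via $u_0$ closes everything immediately; no new computation beyond Lemmas \ref{lem:u0tau} and \ref{lem:transition} and Theorem \ref{thm:mainhecke} is needed.
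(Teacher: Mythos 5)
Your proposal is correct and follows essentially the same route as the paper: faithfulness is reduced to a linear-algebra statement by applying a kernel element, written in the basis $\{T_{I_w}\}_{w\in W}$ from Theorem \ref{thm:mainhecke}, to the elements $\tau_{I_v}(u_0)$ and invoking the invertibility of the matrix $(\tau_{I_w}\tau_{I_v}(u_0))$ from Lemma \ref{lem:u0tau}. The compatibility issue you flag is exactly what the paper handles by passing through the embedding $\HF\to\CHF$ and working in $\End_{\CR}(\CR\lbr\La\rbr_F)$.
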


\begin{proof} It suffices to show that the action of $\HF$ on $\FFAK$ is faithful. Let $f=\sum_{w\in W}c_wT_{I_w}\in \HF$ with $c_w\in R_F\lbr \La\rbr _F^\kappa$ such that $f$ acts trivially on $\FFAK$. In particular $f\cdot \tau_{I_v}(u_0)=0$ for all $v\in W$. Via  the inclusion $\HF\subseteq \CHF$ in Corollary \ref{cor:HFCHF}, it induces $f':=\sum_{w\in W}c_w\mathcal{T}_{I_w}\in \CHF$, and by \eqref{eq:HFCHF}, we have $\sum_{w\in W}c_w\tau_{I_w}\tau_{I_v}(u_0)=f'\cdot \tau_{I_v}(u_0)=0$ by viewing $\tau_{I_v}(u_0)$ in $\CR\lbr\La\rbr_F$. 
By Lemma \ref{lem:u0tau},  the matrix $(\tau_{I_w}\tau_{I_v}(u_0))_{(w,v)\in W\times W}$ is invertible in $\CR\lbr \La\rbr _F$, so $c_w=0$ for all $w\in W$.
\end{proof}

With weaker restriction on the coefficient ring $R$, using functoriality, we could prove a weaker property of $\HF$ for any FGL $F$ 
\begin{corollary} \label{cor:kappaij'}Suppose  $\FFA$ is $\Sigma$-regular and let $F$ be any one-dimensional FGL,  then    $\HF$ is a left $\FFAK$-module with basis $\{T_{I_w}\}_{w\in W}$.
\end{corollary}
\begin{proof} 
Let $F_u$ be the universal FGL over the Lazard ring $\mathbb{L}$, and let $\La_r\subseteq \La$ be the root lattice. In this case the hypothesis of Theorem \ref{thm:mainhecke} is satisfied, hence its conclusion holds for $\mathbf{H}_{F_u}^{\La_r}$. In particular, the coefficients $\tau^{I_w,F_u}_{ij} $ appearing in Proposition \ref{prop:hecke} belong to $\mathbb{L}_{F_u}\lbr\La_r\rbr_{F_u}^{\ka}$. We then prove the the general case. Using the identities (1)--(3) in Proposition \ref{prop:hecke} and Corollary \ref{cor:commute2}, it suffices to prove that the coefficients $\tau^{I_w,F}_{ij}$ belong to $\FFAK$. 

 By \cite[Proposition 5.9]{CZZ}, We have the following commutative diagram
$$\xymatrix{\mathbb{L}_{F_u}\lbr\La_r\rbr_{F_u}^{\ka}\ar[r]^\phi\ar@{}[d]|-*[@]{\subseteq} & \FFAK\ar@{}[d]|-*[@]{\subseteq}\\
        (Q^{F_u})'\ar[r]^\phi & (Q^F)', }$$
        where $\phi$ is the map induced by the unique map of FGLs from $(\mathbb{L},F_u)$ to $(R,F)$. It also induces a map $\phi:\mathbf{H}_{F_u}\to \HF$ sending $T_i^{F_u}$ to $T_i^F$, hence it maps $\tau^{I_w, F_u}_{ij}$ to $\tau^{I_w, F}_{ij}$. So $\tau^{I_w}_{ij}\in \FFAK$ for any $i,j\in [n]$ and $w\in W$. 
\end{proof}

The following corollary generalizes \cite[Theorem 6.5]{Lus} and \cite[Theorem 7.1.14]{CG}.
\begin{corollary} 
\label{cor:center} Assume that $R$ has characteristic 0, then $(R_F\lbr\La\rbr_F^\kappa)^W$ is equal to the center of $\HF$.
\end{corollary}
\begin{proof}  Since $R$ has characteristic 0, $\FA$ is regular. By Corollary \ref{cor:kappaij'}, $\{T_{I_w}\}_{w\in W}$ is a basis of $\HF$. By Proposition \ref{prop:hecke}.(1), if $q\in (R_F\lbr\La\rbr_F^\kappa)^W$, then $s_i(q)=q$ and $\De_i(q)=0$ for any $i\in [n]$. Then $q$ belongs to the center of $\HF$.

Let $z=\sum_{w\in W}q_wT_{I_w}$ be in the center of $\HF$ with $q_w\in \FFAK$, we show that $q_w=0$ unless $w=e$ and $q_e\in (R\lbr\La\rbr_F^\kappa)^W$. Here $e\in W$ is the identity element. We proceed by decreasing induction on $\ell(w)$. Let $w_0$ be the longest element, and let $\al$ be any root, then $w_0(\al)\neq \al$. So $x_\al z=z x_\al=\sum_{w\in W}q_wT_{I_w}x_\al$. By Corollary \ref{cor:commute2}, comparing the coefficients of $T_{I_{w_0}}$ of the two sides, we see that $q_{w_0}x_\al=q_{w_0}x_{w_0(\al)}$, hence $q_{w_0}=0$.

Now suppose $q_w=0$ for $w$ such that $\ell(w)>l$. Let $v\in W$ with length $l$. We choose $\beta\in \Sigma^+$ such that $v(\beta)\neq  \beta$. We have 
$$\sum_{\ell(w)\le l}q_wx_\beta T_{I_w}=x_\beta z=zx_\beta=\sum_{\ell(w)\le l}q_wT_{I_w}x_\beta=\sum_{\ell(w)\le l}q_w\sum_{u\le w}\phi_{I_w,I_u}(x_\beta)T_{I_u}.$$
Comparing the coefficients of $T_{I_v}$, we see that $q_vx_\beta=q_vv(x_\beta)$, so $q_v=0$. So we have $q_w=0$ for $w\neq e$, i.e., $z\in \FFAK$. We then have 
$$zT_i=T_iz=s_i(z)T_i+\vartheta_i\De_i(z),$$
hence $(z-s_i(z))T_i=\vartheta_i\De_i(z)$. Since $\{T_{I_w}\}_{w\in W}$ is linearly independent,  $z=s_i(z)$, and $z\in (\FFAK)^W$. The proof is finished.

\end{proof}

\section{Normalization of formal group laws}
In the remaining part of this paper,  we assume that $a=a_{11}$ is invertible in $R$. In this section we study the notion of the so-called normal formal group law.  Recall that $\imath_Fx$ is the formal inverse of $x$, i.e., $x+_F\imath_Fx=0$. Define 
\begin{equation}\label{eq:h(x)}
h(x)=\tfrac{\imath_Fx+x}{\imath_Fx} ~\text{ so that }~ \imath_Fx=\tfrac{x}{-1+h(x)}.
\end{equation} The power series $h(x)$ has leading term $-ax$, so it has a composition inverse. That is, there is a power series $f(x)\in xR\lbr x\rbr $ such that 
$f(h(x))=h(f(x))=x.$

Recall that a homomorphism of formal group laws $g:F\to G$ is a power series $g(x)\in R\lbr x\rbr $ such that 
$$g(x)+_Gg(y)=g(x+_Fy).$$ 

\begin{lemma}\label{lem:fglkey}
If $a$ is invertible in $R$, then there exists a FGL $\tilF$ over $R$ such that $h:F\to \tilF$ is an isomorphism with   inverse $f:\tilF\to F$.  
Moreover, $\imath_{\tilF}x=\tfrac{x}{x-1}$, $\kappa^\tilF(x)=1$ and $\mu_\tilF (x)=\tfrac{1}{1-x}$.
\end{lemma}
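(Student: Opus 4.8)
The plan is to transport the group law from $F$ to a new law $\tilF$ via the isomorphism $h$, and then extract the explicit formulas for $\imath_{\tilF}$, $\kappa^{\tilF}$ and $\mu_{\tilF}$ from the fact that $h$ intertwines $\imath_F$ and $\imath_{\tilF}$. First, since $a$ is invertible, $h(x) = x + (a-1)x + \cdots$ has leading coefficient $a \in R^\times$ (more precisely $h(x) = ax + \cdots$), so it has a compositional inverse $f \in xR\lbr x\rbr$ with $f(h(x)) = h(f(x)) = x$; this was noted just before the statement. I would \emph{define} $\tilF(x,y) := h(F(f(x), f(y)))$. A routine check (substituting and using associativity, commutativity, and $F(x,0)=x$ for $F$, together with $h(0)=0$, $f(0)=0$) shows $\tilF$ is a one-dimensional commutative FGL over $R$, and by construction $h(F(x,y)) = \tilF(h(x),h(y))$, i.e.\ $h\colon F\to\tilF$ is a homomorphism of FGLs with two-sided inverse $f$, hence an isomorphism.

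Next I would pin down the formal inverse. Applying $h$ to $x +_F \imath_F x = 0$ and using that $h$ is a homomorphism gives $h(x) +_{\tilF} h(\imath_F x) = h(0) = 0$, so $\imath_{\tilF}(h(x)) = h(\imath_F x)$. Using $\imath_F x = -x + x h(x) = x(h(x) - 1)$ from \eqref{def:h(x)}, and writing $t = h(x)$ (so $x = f(t)$), I get
\begin{equation*}
\imath_{\tilF}(t) = h\bigl(f(t)(t-1)\bigr).
\end{equation*}
The claim is that this equals $\tfrac{t}{t-1}$. Rather than simplify this composite directly, the cleaner route is: it suffices to verify $t +_{\tilF} \tfrac{t}{t-1} = 0$, equivalently (applying $f$, which is injective on power series) that $f$ carries $\tfrac{t}{t-1}$ to $\imath_F(f(t))$; but even more directly, since the formal inverse of any FGL is the \emph{unique} power series $g$ with $t +_{\tilF} g(t) = 0$, it is enough to exhibit $\tfrac{t}{t-1}$ as satisfying this. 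Pulling back along $h$: $h(x) +_{\tilF} \tfrac{h(x)}{h(x)-1} = 0$ iff $\tfrac{h(x)}{h(x)-1} = h(\imath_F x) = h(x(h(x)-1))$; and one checks $h(x) - 1 = \tfrac{\imath_F x + x}{x} - 1 = \tfrac{\imath_F x}{x}$, so $\tfrac{h(x)}{h(x)-1} = \tfrac{h(x)\cdot x}{\imath_F x} = \tfrac{\imath_F x + x}{\imath_F x}$, while $x(h(x)-1) = \imath_F x$, so we'd need $h(\imath_F x) = \tfrac{\imath_F x + x}{\imath_F x}$ — and indeed $h(\imath_F x) = \tfrac{\imath_F(\imath_F x) + \imath_F x}{\imath_F x} = \tfrac{x + \imath_F x}{\imath_F x}$ since $\imath_F(\imath_F x) = x$. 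This confirms $\imath_{\tilF}(t) = \tfrac{t}{t-1}$.

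Finally, the formulas for $\kappa^{\tilF}$ and $\mu_{\tilF}$ are immediate from the definitions in Section 2 once $\imath_{\tilF}$ is known:
\begin{equation*}
\mu_{\tilF}(x) = \frac{\imath_{\tilF}x}{-x} = \frac{x/(x-1)}{-x} = \frac{1}{1-x},
\qquad
\kappa^{\tilF}(x) = \frac{1}{x} + \frac{1}{\imath_{\tilF}x} = \frac{1}{x} + \frac{x-1}{x} = 1.
\end{equation*}
(Note the statement's ``$\kappa^F(x)=1$'' is a typo for $\kappa^{\tilF}(x)=1$, consistent with Example on the multiplicative FGL where $\kappa^{F_m} = \beta$ and the normalization should give $\beta = 1$.) I expect the main obstacle to be purely bookkeeping: making sure the compositional inverse $f$ is handled rigorously — that $h$ genuinely has a two-sided compositional inverse over $R$ (not just over $R\otimes\Q$), which uses invertibility of $a$ and an inductive determination of the coefficients of $f$ — and that the substitution identities defining $\tilF$ actually yield a power series with coefficients in $R$ and satisfy the FGL axioms. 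The verification that $\imath_{\tilF}(t) = t/(t-1)$ is then a short consequence of the homomorphism property and the involutivity $\imath_F \circ \imath_F = \mathrm{id}$, as sketched above, and $\kappa^{\tilF}, \mu_{\tilF}$ follow by direct substitution.
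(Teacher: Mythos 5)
Your proposal is correct and follows essentially the same route as the paper: define $\tilF$ by transporting $F$ along $h$, deduce $\imath_{\tilF}(h(x))=h(\imath_F x)$, and then identify this with $\tfrac{t}{t-1}$ using the involutivity $\imath_F(\imath_F x)=x$ (the paper packages that last step as the identity $\mu_F(x)\mu_F(\imath_F x)=1$, i.e.\ $(1-h(x))(1-h(\imath_F x))=1$, which is algebraically the same computation as your direct evaluation of $h(\imath_F x)$). Your observation that the statement's ``$\kappa^F(x)=1$'' should read $\kappa^{\tilF}(x)=1$ is also right.
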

\begin{proof}
Define $x+_{\tilF}y=h(f(x)+_Ff(y)),$ then it is straightforward to show that $\tilF$ is a FGL. Moreover,  $$h(x)+_\tilF h(y)=h [f(h(x))+_Ff(h(y))]=h(x+_Fy),$$
so $h:F\to \tilF$ defines a homomorphism of FGLs. It follows immediately that  $h$ is an isomorphism with inverse $ f:\tilF\to F$.

 Next we compute $\imath_\tilF x$. Since
$$0=h(f(x)+_F \imath_Ff(x))=h(f(x)+_Ff(h(\imath_Ff(x)))),$$
we have $$\imath_\tilF x=h(\imath_Ff(x))\overset{\eqref{eq:h(x)}}{=}h(\tfrac{f(x)}{h(f(x))-1})=h(\tfrac{f(x)}{x-1}).$$
 Let $z=f(x)$ hence $x=h(z)$. We have 
\begin{gather*}
1 =z\cdot\imath_F z\cdot \tfrac{1}{z\cdot\imath_F z}\overset{\eqref{eq:h(x)}}={z\cdot\imath_Fz}\cdot \tfrac{h(\imath_Fz)-1}{\imath_Fz}\tfrac{h(z)-1}{z}\\
=(h(z)-1)(h(\imath_Fz)-1)=(x-1)(h(\tfrac{z}{h(z)-1})-1)\\
=(x-1)(h(\tfrac{f(x)}{x-1})-1)=(x-1)(\imath_\tilF x-1).
\end{gather*}
So $\imath_\tilF x=\tfrac{x}{x-1}$,  $\kappa^F(x)=\tfrac{1}{x}+\tfrac{1}{\imath_\tilF x}=1$ and $\mu_\tilF(x)=\tfrac{\imath_\tilF(x)}{-x}=\tfrac{1}{1-x}$.
\end{proof}

\begin{definition}Suppose that $a$ is invertible in $R$. We say that a FGL $F$ is \textit{normal} if $\imath_F(x)=\tfrac{x}{x-1}$. If $F$ is not normal but $a$ is invertible in $R$, then the associated normal formal group law $\tilF$ as in Lemma \ref{lem:fglkey} exists, and we call it the \textit{normalization} of $F$. 
\end{definition}
Clearly $F$ is normalizable if and only if $a$ is invertible in $F$. Moreover, if $F$ is normal, then its normalization is itself since in this case $h(x)=x$.

\begin{example}For multiplicative formal group law $F_m(x, y)=x+y+a xy$ with $a$ invertible in $R$, we have $\imath_{F_m}x=\tfrac{-x}{1+ax}$, so  $h_{F_m}(x)=-ax$. So its composition inverse is  $f_{F_m}(u)=-\tfrac{1}{a}x$. Therefore, $$\tilF_m(x,y)=h_{F_m}(f_{F_m}(x)+_{F_m}f_{F_m}(y))=x+y-xy.$$ This justifies the name ``normalization''.
\end{example}

\section{formal affine Demazure algebras and formal affine Hecke algebras of normal formal group laws}
In the present section, we apply the normalization of FGL to simplify the notations of the formal affine Demazure algebra $\DF$ and the formal affine Hecke algebra $\HF$. We assume that  $a$ is invertible in $R$. Recall that $h(x)=\tfrac{\imath_F x+x}{\imath_Fx}$ and $f$ is its composition inverse. 

By functoriality in \cite[Lemma 2.6]{CPZ}, we have canonical isomorphism of $R$-algebras 
$$\phi_f:R\lbr \La\rbr _F\to R\lbr \La\rbr _\tilF,\quad x_\la\mapsto f(x_\la)$$
with inverse $$\phi_h:R\lbr \La\rbr _\tilF \to R\lbr \La\rbr _F, \quad x_\la\mapsto h(x_\la).$$ 
This isomorphism extends to $R_F\lbr\La\rbr_F\cong R_\tilF\lbr\La\rbr_\tilF$, $Q_W^F\cong Q_W^{\tilF}$ and $(Q_W^F)'\cong (Q_W^\tilF)'$ immediately. Using the identity $x_i\ka^F_i=h(x_{i})$, it is straightforward to check that 
\begin{eqnarray}
\label{eq:xi}\phi_f(X^F_i)&=&\tfrac{1}{f(x_i)}-\tfrac{1}{f(x_i)}\de_i=\tfrac{x_i}{f(x_i)}X^{\tilF}_i,\\
\label{eq:xikai} \phi_f(x_i\ka^F_i)&=&\phi_f(h(x_i))=h(f(x_i))=x_{i},\\
\label{eq:kaprime}\phi_f((\ka^F_{ij})')&=&\ka^\tilF_{ij},\\
\label{eq:mui}\phi_f(\mu_F(x_\gamma))&=&\phi_f(\tfrac{1}{1-h(x_{\ga})})=\tfrac{1}{1-h(f(x_\ga))}=\tfrac{1}{1-x_\ga} =\mu_\tilF(x_{\gamma}),\\
\label{eq:th} \phi_f(\Th_F)&=&\mu_{\tilF}(x_{\gamma})-\mu_{\tilF}(x_{-\ga})=\Th_\tilF,\\
\label{eq:t}\phi_f(T^F_i)&=&T_i^\tilF.
 \end{eqnarray} 
We then have the following corollary:

\begin{corollary}\label{cor:3} If $a$ is invertible in $R$, then the map $\phi_f$ induces ring isomorphisms $\phi_f:\DF\to \mathbf{D}_{\tilF} $ and $ \phi_f:\HF \to \mathbf{H}_{\tilF}$, where the latter maps $T_i^F$ to $T_i^\tilF$. 
\end{corollary}
\begin{proof}
 Since $h(x)$ has leading term $ax$, $f(x)$ has leading term $\tfrac{1}{a}x$, therefore, $\tfrac{x_i}{f(x_i)}$ is invertible in $R\lbr \La\rbr _\tilF$. Hence, by \eqref{eq:xi},    $\phi_f:\DF\to \mathbf{D}_{\tilF}$ is surjective, hence, it is an  isomorphism. The conclusion for $\HF$ follows from \eqref{eq:t}.
\end{proof}
It's interesting to note that $\phi_f$ preserves $T_i$ but changes $X_i$. We do not have an explanation about that.
 
\begin{lemma}\label{lemma:2}\begin{itemize}
\item[(a)] If $F$ is normal, then 
\begin{equation}\label{eq:kappaij}
\kappa^F_{ij}=\tfrac{1}{x_{i+j}x_i}+\tfrac{1}{x_{i+j}x_j}-\tfrac{1}{x_{i+j}}-\tfrac{1}{x_ix_j}.
\end{equation}
Moreover, $\ka^F_{ij}=\ka^F_{ji}=\ka^F_{-i,-j}=\kappa^F_{-i,i+j}$.
\item[(b)] If $a$ is invertible in $R$, then   $(\kappa^F)'_{ij}\in R\lbr \La\rbr _F$. If in addition $F$ is normal, then $(\kappa^F_{ij})'=\kappa^F_{ij}$.
\end{itemize}
\end{lemma}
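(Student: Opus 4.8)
The plan is to treat part~(1) by a direct substitution inside $R\lbr\La\rbr_F$, and part~(2) by transporting the already-known integrality of $\kappa_{ij}$ for the \emph{normalization} $\tilF$ (which is just some formal group law over $R$) back to $F$ along an isomorphism built from Section~5. For part~(1): if $F$ is normal then $\imath_F(x)=\frac{x}{x-1}$, so $\kappa^F(x)=\frac{1}{x}+\frac{1}{\imath_F x}=\frac{1}{x}+\frac{x-1}{x}=1$, whence $\kappa^F_\al=1$ for every root $\al$. In $R\lbr\La\rbr_F$ this says $x_{-\mu}=\imath_F(x_\mu)=\frac{x_\mu}{x_\mu-1}$ for every $\mu\in\La$, i.e. $\frac{1}{x_{-\mu}}=1-\frac{1}{x_\mu}$; substituting this (with $\mu=\al_i$) into $\kappa^F_{ij}=\frac{1}{x_{i+j}x_j}-\frac{1}{x_{i+j}x_{-i}}-\frac{1}{x_ix_j}$ yields formula~(\ref{eq:kappaij}). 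The four equalities $\kappa^F_{ij}=\kappa^F_{ji}=\kappa^F_{-i,-j}=\kappa^F_{-i,i+j}$ then follow by inspection of~(\ref{eq:kappaij}): the first is its evident $i\leftrightarrow j$ symmetry (recall $x_{i+j}=x_{j+i}$), and the other two come from rewriting~(\ref{eq:kappaij}) with $(i,j)$ replaced by $(-i,-j)$, resp. $(-i,i+j)$ --- using $(-i)+(-j)=-(i+j)$ and $(-i)+(i+j)=j$ --- and once more substituting $\frac{1}{x_{-\mu}}=1-\frac{1}{x_\mu}$; both reductions are routine.

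For part~(2), assume $a$ is invertible. Set $g:=\imath_{\tilF}\circ h\colon F\to\tilF$; since $h\colon F\to\tilF$ is an isomorphism of formal group laws (Lemma~\ref{lem:fglkey}) and $\imath_{\tilF}\colon\tilF\to\tilF$ is an isomorphism (it is its own inverse), $g$ is an isomorphism $F\to\tilF$. Using $h(x)-1=\frac{\imath_F x}{x}$ and $\imath_{\tilF}(z)=\frac{z}{z-1}$ one computes $g(x)=\imath_{\tilF}(h(x))=\frac{h(x)}{h(x)-1}=\frac{x\,h(x)}{\imath_F x}=\frac{\imath_F x+x}{\imath_F x}=1+\frac{x}{\imath_F x}=\kappa^F(x)\cdot x$. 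Exactly as in Lemma~\ref{lemma:1} (via \cite[\S2]{CPZ}), $g$ induces a ring isomorphism $\phi\colon R\lbr\La\rbr_{\tilF}\to R\lbr\La\rbr_F$, which by this identity sends the generator $x_\mu$ of $R\lbr\La\rbr_{\tilF}$ to $g(x_\mu)=\kappa^F(x_\mu)\,x_\mu=\kappa_\mu x_\mu$ for each $\mu\in\La$. Because $a$ is invertible, $\kappa_\mu=\kappa^F(x_\mu)$ has invertible image $\kappa^F(0)=-a$ in $R\lbr\La\rbr_F/\IF=R$, hence $\kappa_\mu$ is a unit in $R\lbr\La\rbr_F$; so $\phi$ extends to an isomorphism of the localizations $R\lbr\La\rbr_{\tilF}[\tfrac{1}{x_\al}\mid\al\in\Sigma]\cong R\lbr\La\rbr_F[\tfrac{1}{x_\al}\mid\al\in\Sigma]$. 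Now $\kappa^{\tilF}_{ij}\in R\lbr\La\rbr_{\tilF}$ by \cite[Lem.~5.7]{HMSZ} (applicable to $\tilF$, which is $\Sigma$-regular since $R\lbr\La\rbr_{\tilF}\cong R\lbr\La\rbr_F$ and has $\kappa^{\tilF}\neq0$), so $\phi(\kappa^{\tilF}_{ij})\in R\lbr\La\rbr_F$; and evaluating $\phi$ on $\kappa^{\tilF}_{ij}=\frac{1}{x_{i+j}x_j}-\frac{1}{x_{i+j}x_{-i}}-\frac{1}{x_ix_j}$ term by term, using $\phi(x_\nu)=\kappa_\nu x_\nu$ together with $\kappa_{-i}=\kappa_i$, produces precisely $(\kappa^F_{ij})'$. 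Therefore $(\kappa^F_{ij})'=\phi(\kappa^{\tilF}_{ij})\in R\lbr\La\rbr_F$. Finally, if $F$ itself is normal then $\kappa_\nu=\kappa^F(x_\nu)=1$ for all $\nu$ by part~(1), and comparing the definitions of $(\kappa^F_{ij})'$ and $\kappa^F_{ij}$ gives $(\kappa^F_{ij})'=\kappa^F_{ij}$ immediately.

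The individual computations here are short, so the main obstacle is bookkeeping rather than difficulty: identifying $\phi$ with the substitution $x_\mu\mapsto\kappa_\mu x_\mu$, checking that it passes to the relevant localizations (this is exactly where invertibility of $a$ is used, through invertibility of each $\kappa_\mu$), and verifying that $\phi$ carries the defining expression for $\kappa^{\tilF}_{ij}$ onto the defining expression for $(\kappa^F_{ij})'$ --- in particular matching the factor $\kappa_{-i}$ occurring on the $\tilF$-side with the $\kappa_i$ in the formula for $(\kappa^F_{ij})'$, which rests on $\kappa_{-i}=\kappa_i$.
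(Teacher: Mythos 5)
Your proof is correct and takes essentially the same route as the paper: part (1) by substituting $\tfrac{1}{x_{-\mu}}=1-\tfrac{1}{x_\mu}$ into the definition of $\ka_{ij}$, and part (2) by transporting $\ka^{\tilF}_{ij}\in R\lbr\La\rbr_{\tilF}$ along the isomorphism induced by $h$. Your map $\phi=\phi_h\circ\imath_{\tilF}$, $x_\mu\mapsto\ka_\mu x_\mu$, is exactly the map the paper records after Lemma \ref{lemma:3}, and your identity $\phi(\ka^{\tilF}_{ij})=(\ka^F_{ij})'$ is the paper's computation $\phi_f((\ka^F_{ij})')=\ka^{\tilF}_{-i,-j}$ read in the opposite direction.
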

\begin{proof}
(a) By Lemma \ref{lem:fglkey}, we see that $\tfrac{1}{x_{-i}}=1-\tfrac{1}{x_i}.$ Substituting it into $\kappa^F_{ij}$, we get the formula (\ref{eq:kappaij}), which  is symmetric with respect to $i$ and $j$, so $\kappa^F_{ij}=\ka^F_{ji}$. Moreover,  we can verify that $\ka^F_{ij}=\ka^F_{-i,-j}=\ka^F_{-i,i+j}$.

(b) Identity \eqref{eq:kaprime} implies that  $(\kappa^F_{ij})'=\phi_h(\kappa^\tilF_{i,j})\in R\lbr \La\rbr _F$. In particular, if $F$ is normal, then $\kappa^F_i=1$ for all $i$, so by definition, $(\kappa^F_{ij})'=\kappa_{ij}^F.$ 
\end{proof}

 \medskip

We simplify the presentations of $\DF$ and $\HF$:
\begin{theorem}\label{thm:demazure}Suppose that $\FA$ is $\Sigma$-regular.  If $F$ is normal, the algebra $\DF$ satisfies the following properties:
\begin{itemize}
\item[(1)] For any $\al\in \Phi$, $(X^F_\al)^2=X^F_\al$.
\item[(2)]  If $(\al_i^\vee,\al_j)=(\al_j^\vee, \al_i)=-1$, so that  $m_{ij}=3$, then $$X^F_{jij}-X^F	_{iji}=\kappa^F_{ij}(X^F_{i}-X^F_j).$$ Moreover, $\kappa^F_{ij}$ is invariant under $s_i$ and $s_j$.
\item[(3)] If $(\al_i^\vee, \al_j)=-1$ and $(\al_j^\vee, \al_i)=-2$, so that $m_{ij}=4$, then $$X^F_{jiji}-X^F_{ijij}=(\kappa^F_{ij}+\kappa^F_{i,i+j})(X^F_{ij}-X^F_{ji}).$$
 Moreover, $\kappa^F_{ij}+\ka^F_{i,i+j}$ is invariant under $s_i$ and $s_j$.
\end{itemize}

\end{theorem}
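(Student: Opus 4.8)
The plan is to derive Theorem \ref{thm:demazure} directly from Proposition \ref{prop:demazure} by substituting the normalization identities of Lemma \ref{lemma:2} and Lemma \ref{lem:fglkey}. Since $F$ is normal we have $\kappa^F_\al = \kappa^F(x_\al) = 1$ for every root $\al$, which I would use throughout.

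First, part (1) is immediate: by Proposition \ref{prop:demazure}.(2) we have $(X^F_\al)^2 = \kappa_\al X^F_\al$, and $\kappa_\al = \kappa^F_\al = 1$ gives $(X^F_\al)^2 = X^F_\al$. For part (2), I would start from Proposition \ref{prop:demazure}.(4), namely $X^F_{jij} - X^F_{iji} = \kappa_{ij} X_i - \kappa_{ji} X_j$, and invoke Lemma \ref{lemma:2}.(1), which asserts $\kappa^F_{ij} = \kappa^F_{ji}$ when $F$ is normal. Writing $\kappa := \kappa^F_{ij} = \kappa^F_{ji}$, the right-hand side collapses to $\kappa(X_i - X_j) = (X^F_i - X^F_j)\kappa^F_{ij}$. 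Here I should be mildly careful about left/right placement of the coefficient: since $\kappa^F_{ij}$ is $s_i$- and $s_j$-invariant (which is exactly the second assertion of part (2)), it commutes with $X_i$ and $X_j$ in $Q^F_W$, so the formula can be written with the coefficient on either side. The $s_i$-, $s_j$-invariance itself follows from the symmetries $\kappa^F_{ij} = \kappa^F_{-i,-j} = \kappa^F_{-i,i+j}$ recorded in Lemma \ref{lemma:2}.(1): applying $s_i$ to the explicit formula \eqref{eq:kappaij} permutes the indices as $x_i \mapsto x_{-i}$, $x_j \mapsto x_{i+j}$, $x_{i+j}\mapsto x_j$, landing on $\kappa^F_{-i,i+j}$, which equals $\kappa^F_{ij}$; similarly for $s_j$, which swaps $i$ and $j$ up to sign and lands on $\kappa^F_{ji}=\kappa^F_{ij}$. (Alternatively, one can phrase this more cleanly using $w\Delta_\al w^{-1} = \Delta_{w(\al)}$ from Lemma \ref{lem:DLa}.(4) and the definition of $\kappa_{ij}$.)

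For part (3), the starting point is the $m_{ij}=4$ relation in Proposition \ref{prop:demazure}.(5). I would first simplify each of the four terms on the right using $\kappa_\al = 1$ and the normalization identity $1/x_{-\al} = 1 - 1/x_\al$ from \eqref{eq:kappa}, which lets me rewrite $\kappa_{i+2j,-j}$, $\kappa_{ji}$, $\kappa_{i+j,j}$, $\kappa_{ij}$ in the symmetric form \eqref{eq:kappaij}. The $X_j \Delta_i(\cdots)$ and $X_i \Delta_j(\cdots)$ terms will need Lemma \ref{lem:DLa}.(1)--(2) to convert Demazure operators applied to these explicit rational expressions into multiplication operators; and the $\Dev_{ji}$ term should be rewritten via $X_\al = \tfrac{1}{x_\al}(1-\delta_{s_\al})$ and part (1) of Proposition \ref{prop:demazure}. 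The expected upshot is that all four terms reorganize into $(X^F_{ij} - X^F_{ji})(\kappa^F_{ij} + \kappa^F_{i,i+j})$, with the coefficient $s_i$- and $s_j$-invariant. I would establish that invariance by the same index-permutation argument as in part (2): $s_i$ and $s_j$ each permute $\{i, j, i+j, i+2j, \ldots\}$ in a way that swaps $\kappa^F_{ij}$ and $\kappa^F_{i,i+j}$ (up to the symmetries in Lemma \ref{lemma:2}.(1)), hence fixes their sum; this invariance is also what justifies freely moving the coefficient across $X^F_{ij} - X^F_{ji}$.

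The main obstacle I anticipate is part (3): turning the rather opaque four-term expression of Proposition \ref{prop:demazure}.(5) — which mixes $X_{ij}$, $\Dev_{ji}$, $X_j\Delta_i(\cdots)$, and $X_i\Delta_j(\cdots)$ — into the clean two-term product requires a genuine computation, organizing the $\kappa$-coefficients (of which there are several, built from $x_i, x_j, x_{i+j}, x_{i+2j}$, and their negatives) and verifying a nontrivial cancellation. I would manage this by consistently reducing every $x_{-\al}$ via \eqref{eq:kappa} so all expressions live in a common form, then matching coefficients of $X_{ij}$, $X_{ji}$, $X_i$, $X_j$, $X_i X_j \cdots$ on both sides; the $G_2$-type case $m_{ij}=6$ is correctly excluded from the statement, consistent with Proposition \ref{prop:demazure}.(6). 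Parts (1) and (2) are routine substitutions by comparison.
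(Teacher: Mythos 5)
Your overall route is the paper's: everything is read off from Proposition \ref{prop:demazure} after substituting the normalization identities of Lemma \ref{lem:fglkey} and Lemma \ref{lemma:2}, and your treatment of parts (1) and (2) coincides with the paper's proof. The one place you diverge is part (3), which you flag as the main obstacle and plan to handle by expanding all four terms of Proposition \ref{prop:demazure}.(5) via $\tfrac{1}{x_{-\al}}=1-\tfrac{1}{x_\al}$ and matching coefficients. The paper's argument is two lines once you combine the two facts you already possess: by the symmetries of Lemma \ref{lemma:2}.(1) one has $\kappa_{i+2j,-j}=\kappa_{-j,i+2j}=\kappa_{j,i+j}$, $\kappa_{ji}=\kappa_{ij}$ and $\kappa_{i+j,j}=\kappa_{j,i+j}$, so the two coefficients $\kappa_{i+2j,-j}+\kappa_{ji}$ and $\kappa_{i+j,j}+\kappa_{ij}$ occurring in Proposition \ref{prop:demazure}.(5) are \emph{the same} element $\kappa_{ij}+\kappa_{j,i+j}$; and since that element is $s_i$- and $s_j$-invariant (the invariance you prove at the end of your argument), we get $\De_i(\kappa_{i+j,j}+\kappa_{ij})=\De_j(\kappa_{i+2j,-j}+\kappa_{ji})=0$, so the last two terms of Proposition \ref{prop:demazure}.(5) vanish outright and the first two collapse to $(X_{ij}-X_{ji})(\kappa_{ij}+\kappa_{j,i+j})$. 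No conversion of the Demazure operators into multiplication operators and no coefficient-matching is needed. So there is no gap, only a missed shortcut: the invariance you establish last is exactly what kills the $\De$-terms first. (A cosmetic point: the theorem writes the coefficient as $\kappa_{i,i+j}$ while the computation, and Theorem \ref{thm:hecke}.(2), produce $\kappa_{j,i+j}$; this discrepancy is in the paper's statement, not in your argument.)
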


\begin{proof} 
(1) It follows from Proposition \ref{prop:demazure} and the identity $\kappa_\al=1$. 

(2) By Lemma \ref{lemma:2}, we see that $\kappa^F_{ij}=\ka^F_{ji}$. Moreover, direct computation shows that $s_{i}(\ka^F_{ij})=s_{j}(\ka_{ij})=\ka^F_{ij}$. The conclusion then follows from Proposition \ref{prop:demazure}.

(3) By Lemma \ref{lemma:2}, we see that $\kappa^F_{i+2j,-j}=\ka^F_{-j,i+2j}=\kappa^F_{j,i+j}$. Moreover, direct computation shows that $\kappa^F_{ij}+\kappa^F_{i,i+j}$ is invariant under $s_i$ and $s_j$. Therefore, $\De_i(\kappa^F_{ij}+\kappa^F_{i,i+j})=\De_j(\kappa^F_{ij}+\kappa^F_{i,i+j})=0$. The conclusion then follows from Proposition \ref{prop:demazure}.
\end{proof}


\begin{theorem}\label{thm:hecke}Suppose that $\FFA$ is $\Sigma$-regular and that the root datum does not contain an irreducible  component of type $G_2$.  If $F$ is normal, then the algebra $\HF$ satisfies the following properties:
\begin{itemize}
\item[(1)] If $(\al_i^\vee,\al_j)=(\al_j^\vee,\al_i)=-1$, so that $m_{ij}=3$, then 
$$T^F_{jij}-T^F_{iji}=\Th_F^2\kappa^F_{ij}(T^F_i-T^F_j).$$
\item[(2)] If $(\al_i^\vee, \al_j)=-1$ and $(\al_j^\vee, \al_i)=-2$, so that $m_{ij}=4$, then 
$$T^F_{jiji}-T^F_{ijij}=\Th_F^2(\kappa^F_{ij}+\kappa^F_{j,i+j})(T^F_{ij}-T^F_{ji}). $$
 \item[(3)] $\FFA=R_F\lbr \La\rbr _F^\kappa=R_F\lbr \La\rbr _F^\sim$.
\item[(4)]  The set $\{T^F_{I_w}\}_{w\in W}$ is a basis of $\HF$ as a left $\FFA$-module.
\item[(5)] As an $R$-algebra, $\HF$ is generated by $\FFA$ and $T_i, i\in [n]$ with  relations (1), (2) and (4) of Proposition \ref{prop:hecke} and (1) and (2) of this theorem.
\end{itemize}

If $F$ is not normal but $a$ is invertible in $R$, then  $\HF$ satisfies
the above properties after replacing $\kappa^F_{ij}$ (resp. $\kappa^F_{j,i+j}$) by $(\kappa^F_{ij})'$ (resp. $(\kappa^F_{j,i+j})'$).
\end{theorem}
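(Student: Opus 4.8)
\textbf{Proof proposal for Theorem \ref{thm:hecke}.}

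The plan is to reduce everything to the normal case via the isomorphism $\imath_\tilF\circ\phi_f\colon\HF\to\mathbf{H}_\tilF$ of Lemma \ref{lemma:3}, and then to deduce properties (1)--(5) for a normal FGL directly from Proposition \ref{prop:hecke}, Theorem \ref{thm:mainhecke}, and the explicit computations in Lemma \ref{lemma:2} and Theorem \ref{thm:demazure}. First I would treat the normal case. For (1), I start from Proposition \ref{prop:hecke}.(5), which gives $T_{jij}-T_{iji}=\Th^2(T_i-T_j)\kappa'_{ij}$; by Lemma \ref{lemma:2}.(2), when $F$ is normal $\kappa'_{ij}=\kappa^F_{ij}\in R\lbr\La\rbr_F$, and by Lemma \ref{lemma:2}.(1) this element is $s_i$- and $s_j$-invariant, hence central enough that it commutes past $T_i$ and $T_j$; this yields exactly the stated formula with $\Th_F^2\kappa^F_{ij}$ on either side. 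For (2), the $m_{ij}=4$ case, I would push the relation of Proposition \ref{prop:demazure}.(5) through the change-of-coefficients identification $\CHF\cong\CR\lbr\La\rbr_F\otimes_{R\lbr\La\rbr_F}\DF$ and the substitution $\de_\al=1-x_\al X_\al$, $T_\al=\varpi+(\vartheta_\al-\varpi x_\al)X_\al$; using the normal-case simplifications $\kappa_\al=1$, $\kappa_{i+2j,-j}=\kappa_{j,i+j}$ and the $s_i,s_j$-invariance of $\kappa_{ij}+\kappa_{i,i+j}$ from Theorem \ref{thm:demazure}.(3), the auxiliary $\De$-terms in Proposition \ref{prop:demazure}.(5) vanish and one is left with $(T_{ij}-T_{ji})\Th_F^2(\kappa^F_{ij}+\kappa^F_{j,i+j})$. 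Alternatively (and perhaps more cleanly) I would derive (2) by the same algebraic manipulation used to get (1), starting from the $\DF$-relation of Theorem \ref{thm:demazure}.(3) and conjugating/rescaling by the invertible factors relating $X_i$ and $T_i$.

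Next, part (3): by Theorem \ref{thm:mainhecke} the coefficients $\tau^{I_w}_{ij}$ lie in $\FFAK$, so $R_F\lbr\La\rbr_F^\sim=R_F\lbr\La\rbr_F^\kappa=\FFAK$; and when $F$ is normal $\kappa_\al=1$ for all $\al$, so inverting the $\kappa_\al$ changes nothing and $\FFAK=R_F\lbr\La\rbr_F=\FFA$. This chain of equalities is immediate once Theorem \ref{thm:mainhecke} is invoked. Part (4) is then just Theorem \ref{thm:mainhecke} (basis $\{T_{I_w}\}_{w\in W}$ over $\FFAK$) combined with (3). For part (5), the generation statement follows from Proposition \ref{prop:hecke}.(6) once one observes that in the simply-laced case the braid-type relations (1), (2), (4) of Proposition \ref{prop:hecke} together with the explicit $m_{ij}=3,4$ relations (1), (2) of this theorem exhaust the relations; for the non-simply-laced types other than $G_2$ (i.e. $B_l$, $C_l$, $F_4$) I would need to check that $m_{ij}=4$ is the only extra case and that relation (2) of this theorem, together with (1), (2), (4) of Proposition \ref{prop:hecke}, suffices — this uses that $\DF$ is generated with relations (1)--(5) of Proposition \ref{prop:demazure} for root data $\ne G_2$ (Proposition \ref{prop:demazure}.(6)) and transports through the isomorphism of Lemma \ref{lemma:3}.

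Finally, for a non-normal $F$ with $a$ invertible, I apply Lemma \ref{lemma:3}: $\HF\cong\mathbf{H}_\tilF$ via $\imath_\tilF\circ\phi_f$, under which $x_i\mapsto$ (unit)$\cdot x_i$ up to the substitutions recorded after Lemma \ref{lemma:3}, and crucially $\phi_h\circ\imath_\tilF(\kappa^\tilF_{ij})=(\kappa^F_{ij})'$. Transporting the relations (1), (2) just established for the normal FGL $\tilF$ back along $\phi_h\circ\imath_\tilF$ replaces $\kappa^\tilF_{ij}$ by $(\kappa^F_{ij})'$ and $\Th_\tilF$ by $\Th_F$ (by the computation $\imath_\tilF\circ\phi_f(\Th_F)=\Th_\tilF$), yielding the claimed formulas; parts (3)--(5) transport verbatim since they are statements about bases and generators preserved by a ring isomorphism. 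The main obstacle I anticipate is part (5) in the non-simply-laced cases: one must make sure that passing from the $\DF$-presentation of Proposition \ref{prop:demazure}.(6) to a $\HF$-presentation does not introduce spurious relations or require the $R_F\lbr\La\rbr_F^\sim$-extension, and here the key point is precisely that Theorem \ref{thm:mainhecke} collapses $R_F\lbr\La\rbr_F^\sim$ back to $\FFAK=\FFA$, so that Proposition \ref{prop:hecke}.(6)--(7) can be applied over the correct coefficient ring; the $m_{ij}=4$ bookkeeping in relation (2), and verifying that all the auxiliary $\De$-terms really are killed by $s_i,s_j$-invariance in every relevant rank-two subsystem, is the most calculation-heavy part.
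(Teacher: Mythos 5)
Your treatment of (1), (2), (5) and of the transport to non-normal $F$ via $\imath_\tilF\circ\phi_f$ and the identity $\phi_h\circ\imath_\tilF(\kappa^\tilF_{ij})=(\kappa^F_{ij})'$ matches the paper's argument; in particular the paper also handles (2) by a direct computation using $\kappa_\al=1$ and the $s_i,s_j$-invariance of $\kappa_{ij}+\kappa_{i,i+j}$, and is no more detailed than you are about the presentation claim in (5).

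The one genuine problem is your derivation of (3) and (4) from Theorem \ref{thm:mainhecke}. That theorem carries hypotheses --- $2\ttt$ regular in $R$, plus the $\Sigma$-regularity condition --- which are \emph{not} among the hypotheses of Theorem \ref{thm:hecke}; the standing assumptions of Sections 5--6 are only that $\kappa^F\neq 0$ and $a$ is invertible. So your argument proves (3)--(4) only under strictly stronger assumptions than the statement asserts. It also inverts the logical role the paper intends for this theorem: the introduction explicitly presents Theorem \ref{thm:hecke} as an \emph{independent} proof of Theorem \ref{thm:mainhecke} with explicit coefficients, so routing (3) back through Theorem \ref{thm:mainhecke} loses that. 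The intended argument is self-contained and shorter: since the root datum is not of type $G_2$, every $m_{ij}\le 4$, so by (1) and (2) the only auxiliary coefficients $\tau^{I_w}_{ij}$ occurring in Proposition \ref{prop:hecke}.(3) are $\Th_F^2\kappa^F_{ij}$ and $\Th_F^2(\kappa^F_{ij}+\kappa^F_{j,i+j})$ (and their $W$-translates), which lie in $\FFA$ by Lemma \ref{lemma:2}; hence $R_F\lbr\La\rbr_F^\sim=\FFAK=\FFA$ directly (the last equality because $\kappa_\al=1$), and (4) then follows from Proposition \ref{prop:hecke}.(7) over this coefficient ring. Replacing your appeal to Theorem \ref{thm:mainhecke} by this observation repairs the proof without further work.
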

\begin{proof}

 (1): It follows from Proposition \ref{prop:hecke} and the identity $\kappa_\al=1$. 
 
 (2): It follows from  direct computation of $T_{jiji}-T_{ijij}$.
 
 (3): Since $\kappa_\al=1$ for all $\al$,  $\FFA=\FFAK$. Since we only consider root systems of type different from type $G_2$,  $m_{ij}\le 4$. Therefore, the elements $\tau^{I_w}_{ij}$ appeared in Proposition \ref{prop:hecke} belong to the set $\{\kappa_{ij}^F, \kappa^F_{ij}+\ka^F_{j,i+j}\}\subsetneq \FFA$. So $R_F\lbr \La\rbr _F^\sim=\FFA.$ 
 
 (4) and (5): These are direct consequence of (1)-(3) of this theorem  and Proposition \ref{prop:hecke}.
 
 If $F$ is not normal but $a$ is invertible, then its normalization $\tilF$ exists. By \eqref{eq:kaprime}, the isomorphism $\phi_f:\FFA\to R_\tilF\lbr \La\rbr_\tilF$ maps $(\kappa_{ij}^F)'$ to $\kappa_{ij}^\tilF$. Therefore, the $R$-algebra $\HF$ satisfies the properties (1)-(5) above after replacing $\kappa_{ij}^F$ by $(\kappa_{ij}^F)'$.
\end{proof}


\begin{thebibliography}{99}

\bibitem[CPZ]{CPZ} B. Calm\`es, V. Petrov and K. Zainoulline, \textit{Invariants, torsion indices and oriented cohomology of complete
  flags,} Ann. Sci. Ecole Norm. Sup. (4) 46 (2013), no.3, 405-448. 
\bibitem[CZZ]{CZZ} B. Calm\`es, K. Zainoulline, and C. Zhong, \textit{A coproduct structure on the formal affine Demazure algebra}, arXiv:1209.1676.
\bibitem[CZZ2]{CZZ2} B. Calm\`es, K. Zainoulline, and C. Zhong, \textit{Push-pull operators on the formal affine Demazure algebra and its dual}, arXiv:1312.0019.
\bibitem[CG]{CG} N. Chriss, V. Ginzburg, \textit{Representation Theory and Complex Geometry}, Birkh$\ddot{a}$user, Boston (1997).

\bibitem[Dem]{Dem}  M. Demazure, \textit{Invariants sym\'etriques entiers des groupes de Weyl et torsion}, Invent. Math. 21:287-301, 1973.

\bibitem[HMSZ]{HMSZ} A. Hoffmann, J. Malag\'{o}n-L\'{o}pez, A. Savage, and K.
Zainoulline, \textit{Formal Hecke algebras and algebraic oriented cohomology
theories,} to appear in Selecta Math.

\bibitem[Gin]{Gin} V. Ginzburg, \textit{Geometric methods in the representation theory of Hecke algebras and quantum groups. } Notes by Vladimir Baranovsky. {NATO Adv. Sci. Inst. Ser. C Math. Phys. Sci.}, 514, Representation theories and algebraic geometry, 127-183, Kluwer Acad. Publ., Dordrecht, 1998. 

\bibitem[KK86]{KK86} B. Kostant and S. Kumar,
 \textit{The nil {H}ecke ring and cohomology of {$G/P$} for a {K}ac-{M}oody
  group {$G^*$},}
 { Advances in Math.} 62:187--237, 1986.
 
 \bibitem[KK90]{KK90} B. Kostant and S. Kumar, \textit{$T$-equivariant K-theory of generalized flag varieties}, J. Differential Geometry 32 (1990), 549-603.
 
 \bibitem[LM]{LM} M. Levine and F. Morel, \textit{Algebraic cobordism}, Springer Monographs in mathematics, Springer, Berlin, 2007. 
 
 \bibitem[Lus]{Lus} G. Lusztig, \textit{Cuspidal local systems and graded Hecke algebras. I,} {Inst. Hautes \'{E}tudes Sci. Publ. Math. }67 (1988), 145-202.
 
 \bibitem[MZZ]{MZZ} J. Malag\'on-L\'opez, K. Zainoulline and C. Zhong, \textit{Invariants, exponents and formal group laws}, arXiv:1207.1880.
 
 \bibitem[Zho]{Zho} C. Zhong, \text{On the $\gamma$-filtration of oriented cohomology of complete spin-flags}, J. Pure Appl. Algebra, 218 (2014), no.7, 1291-1301.
\end{thebibliography}
\end{document}